\newcommand{\leqnomode}{\tagsleft@true}
\newcommand{\reqnomode}{\tagsleft@false}
\newtheorem{theorem}{Theorem}[section]
\newtheorem{lemma}[theorem]{Lemma}
\newtheorem{definition}[theorem]{Definiton}
\newtheorem{corollary}[theorem]{Corollary}
\theoremstyle{definition}
\newsavebox\myboxA
\newsavebox\myboxB
\newlength\mylenA
\newcommand*\yoverline[2][0.75]{%
    \sbox{\myboxA}{$\m@th#2$}%
    \setbox\myboxB\null
    \ht\myboxB=\ht\myboxA%
    \dp\myboxB=\dp\myboxA%
    \wd\myboxB=#1\wd\myboxA
    \sbox\myboxB{$\m@th\overline{\copy\myboxB}$}
    \setlength\mylenA{\the\wd\myboxA}
    \addtolength\mylenA{-\the\wd\myboxB}%
    \ifdim\wd\myboxB<\wd\myboxA%
       \rlap{\hskip 0.5\mylenA\usebox\myboxB}{\usebox\myboxA}%
    \else
        \hskip -0.5\mylenA\rlap{\usebox\myboxA}{\hskip 0.5\mylenA\usebox\myboxB}%
    \fi}
\numberwithin{equation}{section}
\begin{document}




\newcommand{\diver}{\operatorname{div}}
\newcommand{\lin}{\operatorname{Lin}}
\newcommand{\curl}{\operatorname{curl}}
\newcommand{\ran}{\operatorname{Ran}}
\newcommand{\kernel}{\operatorname{Ker}}
\newcommand{\la}{\langle}
\newcommand{\ra}{\rangle}
\newcommand{\N}{\mathbb{N}}
\newcommand{\R}{\mathbb{R}}
\newcommand{\C}{\mathbb{C}}

\newcommand{\ld}{\lambda}
\newcommand{\fai}{\varphi}
\newcommand{\0}{0}
\newcommand{\n}{\mathbf{n}}
\newcommand{\uu}{{\boldsymbol{\mathrm{u}}}}
\newcommand{\UU}{{\boldsymbol{\mathrm{U}}}}
\newcommand{\buu}{\bar{{\boldsymbol{\mathrm{u}}}}}
\newcommand{\ten}{\\[4pt]}
\newcommand{\six}{\\[4pt]}
\newcommand{\nb}{\nonumber}
\newcommand{\hgamma}{H_{\Gamma}^1(\OO)}
\newcommand{\opert}{O_{\varepsilon,h}}
\newcommand{\barx}{\bar{x}}
\newcommand{\barf}{\bar{f}}
\newcommand{\hatf}{\hat{f}}
\newcommand{\xoneeps}{x_1^{\varepsilon}}
\newcommand{\xh}{x_h}
\newcommand{\scaled}{\nabla_{1,h}}
\newcommand{\scaledb}{\widehat{\nabla}_{1,\gamma}}
\newcommand{\vare}{\varepsilon}
\newcommand{\A}{{\bf{A}}}
\newcommand{\RR}{{\bf{R}}}
\newcommand{\B}{{\bf{B}}}
\newcommand{\CC}{{\bf{C}}}
\newcommand{\D}{{\bf{D}}}
\newcommand{\K}{{\bf{K}}}
\newcommand{\oo}{{\bf{o}}}
\newcommand{\id}{{\bf{Id}}}
\newcommand{\E}{\mathcal{E}}
\newcommand{\ii}{\mathcal{I}}
\newcommand{\sym}{\mathrm{sym}}
\newcommand{\lt}{\left}
\newcommand{\rt}{\right}
\newcommand{\ro}{{\bf{r}}}
\newcommand{\so}{{\bf{s}}}
\newcommand{\e}{{\bf{e}}}
\newcommand{\ww}{{\boldsymbol{\mathrm{w}}}}
\newcommand{\vv}{{\boldsymbol{\mathrm{v}}}}
\newcommand{\zz}{{\boldsymbol{\mathrm{z}}}}
\newcommand{\U}{{\boldsymbol{\mathrm{U}}}}
\newcommand{\G}{{\boldsymbol{\mathrm{G}}}}
\newcommand{\VV}{{\boldsymbol{\mathrm{V}}}}
\newcommand{\T}{{\boldsymbol{\mathrm{U}}}}
\newcommand{\II}{{\boldsymbol{\mathrm{I}}}}
\newcommand{\ZZ}{{\boldsymbol{\mathrm{Z}}}}
\newcommand{\hKK}{{{\bf{K}}}}
\newcommand{\f}{{\bf{f}}}
\newcommand{\g}{{\bf{g}}}
\newcommand{\lkk}{{\bf{k}}}
\newcommand{\tkk}{{\tilde{\bf{k}}}}
\newcommand{\W}{{\boldsymbol{\mathrm{W}}}}
\newcommand{\Y}{{\boldsymbol{\mathrm{Y}}}}
\newcommand{\EE}{{\boldsymbol{\mathrm{E}}}}
\newcommand{\F}{{\bf{F}}}
\newcommand{\spacev}{\mathcal{V}}
\newcommand{\spacevg}{\mathcal{V}^{\gamma}(\Omega\times S)}
\newcommand{\spacevb}{\bar{\mathcal{V}}^{\gamma}(\Omega\times S)}
\newcommand{\spaces}{\mathcal{S}}
\newcommand{\spacesg}{\mathcal{S}^{\gamma}(\Omega\times S)}
\newcommand{\spacesb}{\bar{\mathcal{S}}^{\gamma}(\Omega\times S)}
\newcommand{\skews}{H^1_{\barx,\mathrm{skew}}}
\newcommand{\kk}{\mathcal{K}}
\newcommand{\OO}{O}
\newcommand{\bhe}{{\bf{B}}_{\vare,h}}
\newcommand{\pp}{{\mathbb{P}}}
\newcommand{\ff}{{\mathcal{F}}}
\newcommand{\mWk}{{\mathcal{W}}^{k,2}(\Omega)}
\newcommand{\mWa}{{\mathcal{W}}^{1,2}(\Omega)}
\newcommand{\mWb}{{\mathcal{W}}^{2,2}(\Omega)}
\newcommand{\twos}{\xrightharpoonup{2}}
\newcommand{\twoss}{\xrightarrow{2}}
\newcommand{\bw}{\bar{w}}
\newcommand{\br}{\bar{{\bf{r}}}}
\newcommand{\bz}{\bar{{\bf{z}}}}
\newcommand{\tw}{{W}}
\newcommand{\tr}{{{\bf{R}}}}
\newcommand{\tz}{{{\bf{Z}}}}
\newcommand{\lo}{{{\bf{o}}}}
\newcommand{\hoo}{H^1_{00}(0,L)}
\newcommand{\ho}{H^1_{0}(0,L)}
\newcommand{\hotwo}{H^1_{0}(0,L;\R^2)}
\newcommand{\hooo}{H^1_{00}(0,L;\R^2)}
\newcommand{\hhooo}{H^1_{00}(0,1;\R^2)}
\newcommand{\dsp}{d_{S}^{\bot}(\barx)}
\newcommand{\LB}{{\bf{\Lambda}}}
\newcommand{\LL}{\mathbb{L}}
\newcommand{\mL}{\mathcal{L}}
\newcommand{\mhL}{\widehat{\mathcal{L}}}
\newcommand{\loc}{\mathrm{loc}}
\newcommand{\tqq}{\mathcal{Q}^{*}}
\newcommand{\tii}{\mathcal{I}^{*}}
\newcommand{\Mts}{\mathbb{M}}
\newcommand{\pot}{\mathrm{pot}}
\newcommand{\tU}{{\widehat{\bf{U}}}}
\newcommand{\tVV}{{\widehat{\bf{V}}}}
\newcommand{\pt}{\partial}
\newcommand{\bg}{\Big}
\newcommand{\hA}{\widehat{{\bf{A}}}}
\newcommand{\hB}{\widehat{{\bf{B}}}}
\newcommand{\hCC}{\widehat{{\bf{C}}}}
\newcommand{\hD}{\widehat{{\bf{D}}}}
\newcommand{\fder}{\partial^{\mathrm{MD}}}
\newcommand{\Var}{\mathrm{Var}}
\newcommand{\pta}{\partial^{0\bot}}
\newcommand{\ptaj}{(\partial^{0\bot})^*}
\newcommand{\ptb}{\partial^{1\bot}}
\newcommand{\ptbj}{(\partial^{1\bot})^*}
\newcommand{\geg}{\Lambda_\vare}
\newcommand{\tpta}{\tilde{\partial}^{0\bot}}
\newcommand{\tptb}{\tilde{\partial}^{1\bot}}
\newcommand{\ua}{u_\alpha}
\newcommand{\pa}{p\alpha}
\newcommand{\qa}{q(1-\alpha)}
\newcommand{\Qa}{Q_\alpha}
\newcommand{\Qb}{Q_\eta}
\newcommand{\ga}{\gamma_\alpha}
\newcommand{\gb}{\gamma_\eta}
\newcommand{\ta}{\theta_\alpha}
\newcommand{\tb}{\theta_\eta}


\newcommand{\mH}{\mathcal{H}}
\newcommand{\mD}{\mathcal{D}}
\newcommand{\csob}{\mathcal{S}}
\newcommand{\mA}{\mathcal{A}}
\newcommand{\mK}{\mathcal{K}}
\newcommand{\mS}{\mathcal{S}}
\newcommand{\mI}{\mathcal{I}}
\newcommand{\tas}{{2_*}}
\newcommand{\tbs}{{2^*}}
\newcommand{\tm}{{\tilde{m}}}
\newcommand{\tdu}{{\phi}}
\newcommand{\tpsi}{{\tilde{\psi}}}
\newcommand{\Z}{{\mathbb{Z}}}
\newcommand{\tsigma}{{\tilde{\sigma}}}
\newcommand{\tg}{{\tilde{g}}}
\newcommand{\tG}{{\tilde{G}}}
\newcommand{\mM}{\mathcal{M}}
\newcommand{\mC}{\mathcal{C}}
\newcommand{\wlim}{{\text{w-lim}}\,}
\newcommand{\SSS}{\mathcal{S}}

\title{Normalized ground states for 3D dipolar Bose-Einstein condensate with  attractive three-body
interactions}
\author{Yongming Luo \thanks{Institut f\"{u}r Wissenschaftliches Rechnen, Technische Universit\"at Dresden, 01069 Dresden, Germany} \ and\
Athanasios Stylianou
\thanks{Institut f\"{u}r Mathematik, Universit\"at Kassel, 34132 Kassel, Germany}}
\date{}
\maketitle

\begin{abstract}
We study the existence of normalized ground states for the 3D dipolar Bose-Einstein condensate
equation with attractive three-body interactions:
\begin{align}\label{abstract eq dipolar}
-\Delta u+\beta u+\ld_1|u|^2 u+\ld_2 (K*|u|^2)u-|u|^4u=0.\tag{DBEC}
\end{align}
When $\ld_2=0$ or $u$ is radial, \eqref{abstract eq dipolar} reduces to the cubic-quintic NLS
\begin{align}\label{abstract eq cqnls}
-\Delta u+\beta u+\ld_1|u|^2 u-|u|^4u=0\tag{CQNLS},
\end{align}
which has been recently studied by Soave in \cite{SoaveCritical}. In particular, it was shown that
for any $\ld_1<0$ and $c>0$, \eqref{abstract eq cqnls} possesses a radially symmetric ground state
solution with mass $c$ and for $\ld_1\geq 0$, \eqref{abstract eq cqnls} has no non-trivial solution.
We show that by adding a dipole-dipole interaction to \eqref{abstract eq cqnls}, the geometric
nature of \eqref{abstract eq cqnls} changes dramatically and techniques as the ones from
\cite{SoaveCritical} cannot be used anymore to obtain similar results. More precisely, due to the
axisymmetric nature of the dipole-dipole interaction potential, the energy corresponding to
\eqref{abstract eq dipolar} is not stable under symmetric rearrangements, hence conventional
arguments based on the radial symmetry of solutions are inapplicable. We will overcome this
difficulty by appealing to subtle variational and perturbative methods and prove the following:
\begin{itemize}
\item[(i)] If the pair $(\ld_1,\ld_2)$ is \textit{unstable} and $\ld_1<0$, then for any $c>0$,
\eqref{abstract eq dipolar} has a ground state solution with mass $c$.
\item[(ii)] If the pair $(\ld_1,\ld_2)$ is unstable and $\ld_1\geq 0$, then there exists some
$c^*=c^*(\ld_1,\ld_2)\geq 0$ such that for all $c>c^*$, \eqref{abstract eq dipolar} has a ground
state solution with mass $c$. Moreover, any non-trivial solution of \eqref{abstract eq dipolar} in
this case must be non-radial.
\item[(iii)] If the pair $(\ld_1,\ld_2)$ is \textit{stable}, then \eqref{abstract eq dipolar} has
no non-trivial solutions.
\end{itemize}
\end{abstract}



\section{Introduction and main results}
In this paper, we prove existence of solitary waves for the equation modeling 3D dipolar
Bose-Einstein condensates (DBEC) with attractive three-body interactions:
\begin{align}\label{nls}
i\partial_t \phi=-\Delta \phi+\lambda_1|\phi|^2\phi+\lambda_2(K*|\phi|^2)\phi-|\phi|^4\phi,
\end{align}
where $\ld_1,\ld_2\in\R$ are given constants and the dipole-dipole kernel $K$ is defined by
$$K(x)=\frac{x_1^2+x_2^2-2x_3^2}{|x|^5}.$$
To be more precise, we will be seeking solitary wave solutions $u$ of \eqref{nls} which satisfy the
stationary DBEC equation:
\begin{align}\label{pde}
-\Delta u+\beta u+\lambda_1 |u|^2 u+\lambda_2 (K*|u|^2)u-|u|^4 u=0
\end{align}
and possess a prescribed mass $\|u\|_2^2=c$ for a given $c\in(0,\infty)$. It then follows directly
that the function $\phi(t,x)=e^{i\beta t}u(x)$ is a solution to \eqref{nls}, for any solution $u$
of \eqref{pde}. Equations \eqref{nls} and \eqref{pde} can be generalized to
\begin{align}\label{gnls}
i\partial_t \phi=-\Delta \phi+\lambda_1|\phi|^2\phi+\lambda_2(K*|\phi|^2)\phi+\ld_3|\phi|^p\phi
\end{align}
and
\begin{align}\label{gpde}
-\Delta u+\beta u+\lambda_1 |u|^2 u+\lambda_2 (K*|u|^2)u+\ld_3|u|^p u=0
\end{align}
with $\ld_3\in\C$ and $p\in(0,\infty)$ respectively\footnote{More generally, an external trapping
potential $V_{\rm ext}$ should also be contained in \eqref{gnls}. We consider in this paper the case
$V_{\rm ext}=0$, which corresponds to the so-called \textit{self-bounded} model.}. The
potentials $|u|^2 u$ and $(K*|u|^2)u$ describe the two-body and long range dipole-dipole
interactions respectively. For the potential $|u|^p u$, the cases $p=5$ and $p=6$ correspond to the
Lee-Huang-Yang-correction (LHY-correction) and three-body interaction respectively. When in a
physical experiment the parameters $\ld_1,\ld_2$ are tuned so that they lie in the
\textit{unstable regime} (see Definition \ref{def stable}), the classical Gross-Pitaevskii
theory would predict a collapse of the gas, which was not seen during the experiment. In order
to stabilize the Gross-Pitaevskii equation, it has been suggested to incorporate a
higher order repulsive\footnote{In the case where the three-body
interaction is under consideration, the parameter $\ld_3$ might also have non-trivial imaginary
part, indicating a three-body loss effect.} term such as the LHY-correction or three-body
interactions; we refer to the papers
\cite{Blakie2016,Ferrier-BarbutEtAl2016,KadauEtAl2016,LahayeEtAl2009,SchmittEtAl2016,
ZhouLiangZhang2010} and the references therein for a more comprehensive introduction on the physical
background of \eqref{gnls}. There has also been an ongoing investigation of models
for collapsing Bose-Einstein condensates with attractive three-body interaction (see for instance
\cite{Collapse2011,Koehler2002,Sabari2010,Singh2016}). The subject of this paper is to study this
aspect of the DBEC, which itself presents a mathematically challenging problem.

The first mathematically rigorous analysis for \eqref{gnls} dates back to the work of Carles,
Markowich and Sparber in \cite{Carles2008}, where \eqref{gnls} was considered without any higher
order term ($\ld_3=0$). The authors proved local and global well-posedness and finite time blow-up
results for \eqref{gnls}. Particularly, 1D and 2D DBEC-models were also derived from the 3D model
via dimension reduction. Later, Antonelli and Sparber \cite{AntonelliSparber2011} proved existence
of ground states for \eqref{gpde} (again in the case $\ld_3=0$) in the unstable regime using the
so-called Weinstein functional method. Further regularity and symmetry results of the ground states
were also established. The existence of normalized ground states for \eqref{gnls} in the
unstable regime without higher order term was later proved by Bellazzini and Jeanjean
\cite{BellazziniJeanjean2016} using mountain pass arguments. In particular, it was shown that
ground states obtained by mountain pass are automatically the Weinstein optimizers
found in \cite{AntonelliSparber2011}. Further existence, stability and well-posedness results
for \eqref{gnls} and \eqref{gpde} with or without an external trapping potential were also obtained
in \cite{BellazziniJeanjean2016}. In \cite{BellazziniForcella2019}, Bellazzini and Forcella were
able to utilize the ground states given in \cite{AntonelliSparber2011} and
\cite{BellazziniJeanjean2016} to formulate a sharp scattering threshold for \eqref{gnls} without
higher order term. The first results for \eqref{gnls} and \eqref{gpde} with higher order term were
given by the Authors \cite{LuoStylianouJMAA,LuoStylianouDCDSB}, where the cases $\ld_3<0,p=5$ and
$\ld_3>0,p\in(4,6]$ were studied. We also refer to
\cite{ArdilaDipolar,BaoBenCai2012,BaoCai2018,BaoCaiWang2010,BellazziniForcellaCVPDE,
CarlesHajeiej2015,DinhDipolar1,
DinhDipolar2,EychenneRougerie2019,FengCaoLiu2021,LiMa2021,TriayDipolar2018} and the references
therein for recent analytical and numerical progress on \eqref{gnls} and \eqref{gpde}.

From now on we focus on the DBEC \eqref{pde} with attractive three-body interactions. A solitary
wave solution of \eqref{pde} is of fundamental importance for studying \eqref{nls}, since it might
be the only observable quantity in physical experiments and can be seen as a balance point between
linear and nonlinear effects. Here, we will be looking for ground state solutions with prescribed
mass, i.e., the total number of particles in the gas.

Before we state the main results of the paper, we firstly fix some definitions and notation. The Hamiltonian $E(u)$ corresponding to \eqref{pde} is defined by
\begin{align*}
E(u)=\frac{1}{2}\|\nabla u\|^2_{2}+\frac{1}{4}\lambda_1\|u\|^4_4+\lambda_2\int_{\R^3}(K*|u|^2)|u|^2\,dx-\frac{1}{6}\|u\|_6^6.
\end{align*}
For $c>0$, the manifold $S(c)$ is defined by
\begin{align*}
S(c)=\{u\in H^1(\R^3):\|u\|^2_{2}=c\}.
\end{align*}
The definition of \textit{unstable} and \textit{stable} regimes is given as follows:
\begin{definition}[Unstable and stable regimes]\label{def stable}
We define the unstable and stable regimes as follows:
\begin{itemize}
\item[(i)] The pair $(\lambda_1,\lambda_2)$ is said to be in the \textbf{unstable} regime if
$$\lambda_2>0\ \text{ and }\ \lambda_1-\frac{4\pi}{3}\lambda_2< 0$$
or
$$\lambda_2< 0\ \text{ and }\ \lambda_1+\frac{8\pi}{3}\lambda_2< 0;$$


\item[(ii)] The pair $(\lambda_1,\lambda_2)$ is said to be in the \textbf{stable} regime if
$$\lambda_2>0\ \text{ and }\ \lambda_1-\frac{4\pi}{3}\lambda_2>0$$
or
$$\lambda_2< 0\ \text{ and }\ \lambda_1+\frac{8\pi}{3}\lambda_2>0;$$
\end{itemize}
\end{definition}
We use the following form of the Fourier transform:
$$ \hat{f}(\xi)=\int_{\R^3} f(x)e^{-i\xi\cdot x}\,dx.$$
Due to \cite{Carles2008}, the Fourier transform $\widehat K$ of $K$ is given by
$$
\widehat{K}(\xi)=\frac{4\pi}{3}\frac{2\xi_3^2-\xi_1^2-\xi_2^2}{|\xi|^2}\in
\left[-\frac{4\pi}{3},\frac{8\pi}{3}\right].$$
The following quantities will also be used throughout the paper:
\begin{align*}
A(u)&:=\|\nabla u\|^2_2,\\
B(u)&:=\lambda_1\|u\|_4^4+\lambda_2\int_{\R^3} (K*|u|^2)|u|^2\,dx,\\
C(u)&:=\|u\|_6^6,\\
Q(u)&:=A(u)+\frac{3}{4}B(u)-C(u).
\end{align*}
Next, we define the set $V(c)$ by
\begin{align}\label{def of Vc}
V(c)=\{u\in S(c):Q(u)=0\}
\end{align}
and we define the variational problem $\gamma(c)$ by
\begin{align}\label{variational gamma c}
\gamma(c):=\inf\{E(u):u\in V(c)\}.
\end{align}
Finally, for $u\in H^1(\R^3)$ and $t>0$ we define the function $u^t$ by
\begin{align}\label{def l2 scaling}
u^t(x)=t^{\frac{3}{2}}u(tx).
\end{align}
It is a direct calculation to check that the rescaling \eqref{def l2 scaling} leaves the
$L^2$-norm invariant.

The main result of the present paper is the following:
\begin{theorem}[Existence of ground states]\label{ground state}
The following statements hold true:
\begin{itemize}
\item[(i)] If $(\lambda_1,\lambda_2)$ is an unstable pair and $\ld_1<0$, then for any $c\in(0,\infty)$ the variational problem \eqref{variational gamma c} has a positive optimizer $u_c\in S(c)$.

\item[(ii)]If $(\lambda_1,\lambda_2)$ is an unstable pair and $\ld_1\geq 0$, then there exists some $c^*=c^*(\ld_1,\ld_2)\in[0,\infty)$ such that $c\mapsto\gamma(c)$ is constantly equal to $\frac{\SSS^{\frac32}}{3}$ on $(0,c^*]$ and strictly less than $\frac{\SSS^{\frac32}}{3}$ on $(c^*,\infty)$. Furthermore, for any $c\in(c^*,\infty)$ the variational problem \eqref{variational gamma c} has a positive optimizer $u_c\in S(c)$.
\end{itemize}
Moreover, the optimizers given by (i) and (ii) are solutions of \eqref{pde} with $\beta=\beta_c>0$.

\begin{itemize}
\item[(iii)] Let $u_c$ be the solution of \eqref{pde} given by (ii). Then for any $x\in\R^3$,
$u_c(\cdot+x)$ is not radially symmetric.
\item[(iv)] If $(\lambda_1,\lambda_2)$ is a stable pair, then \eqref{pde} has no non-trivial solution in $H^1(\R^3)$.
\end{itemize}
\end{theorem}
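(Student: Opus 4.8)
The plan is to read \eqref{pde} as the Euler--Lagrange equation of $E$ constrained to the mass sphere $S(c)$ and to produce the ground state as a minimiser of $E$ over the Pohozaev set $V(c)$, with the number $\tfrac{\SSS^{3/2}}{3}$ — the energy of an Aubin--Talenti bubble — as the threshold deciding compactness. First I would record the scaling identities: for $u^t(x)=t^{3/2}u(tx)$ one has $A(u^t)=t^{2}A(u)$, $C(u^t)=t^{6}C(u)$ and, since $K$ is homogeneous of degree $-3$, $B(u^t)=t^{3}B(u)$, so that $g_u(t):=E(u^t)=\tfrac12 t^{2}A(u)+\tfrac14 t^{3}B(u)-\tfrac16 t^{6}C(u)$ obeys $t\,g_u'(t)=Q(u^t)$. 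A short analysis of $g_u$ on $(0,\infty)$ (three subcases according to $\mathrm{sgn}\,B(u)$) shows it has a unique critical point $t_u>0$, a global maximum; hence each $u\in S(c)$ has a unique dilate $u^{t_u}\in V(c)$ and $\gamma(c)=\inf_{u\in S(c)}\max_{t>0}E(u^t)$. Two consequences will be used throughout: (a) on $V(c)$ one has $C(u)=A(u)+\tfrac34 B(u)$, hence $E(u)=\tfrac13 A(u)+\tfrac18 B(u)$, so that if $B(u)\ge 0$ then $A(u)\le C(u)\le\SSS^{-3}A(u)^3$ forces $A(u)\ge\SSS^{3/2}$ and therefore $E(u)\ge\tfrac{\SSS^{3/2}}{3}$; (b) concentrating an Aubin--Talenti bubble, truncating, $L^2$-normalising to mass $c$ and dilating onto $V(c)$ gives competitors with energy tending to $\tfrac{\SSS^{3/2}}{3}$, so $\gamma(c)\le\tfrac{\SSS^{3/2}}{3}$ for every $c>0$.

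For (i) and (ii) the scheme is identical: take a minimising sequence for $\gamma(c)$, which is bounded in $H^1$ by (a), translate it to avoid vanishing, and run a profile-decomposition/concentration-compactness argument in the spirit of Soave, the new ingredient being that the nonlocal term $\int(K*|u|^2)|u|^2$ passes to the limit along such sequences (Brezis--Lieb-type splitting, using $\|\widehat K\|_\infty<\infty$). Compactness can fail only through escape of a Sobolev bubble, and this is excluded once one knows the strict inequality $\gamma(c)<\tfrac{\SSS^{3/2}}{3}$ — which is exactly where the hypotheses on $(\lambda_1,\lambda_2)$ enter. In case (i), $\lambda_1<0$: a radial function has $\int(K*|u|^2)|u|^2=0$ since the spherical average of $\widehat K$ vanishes, so on radial competitors the problem coincides with CQNLS, whose ground-state energy Soave computed to be $<\tfrac{\SSS^{3/2}}{3}$; hence $\gamma(c)<\tfrac{\SSS^{3/2}}{3}$ for all $c$, the minimiser $u_c$ exists, and by (a) necessarily $B(u_c)<0$. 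In case (ii), $\lambda_1\ge 0$: radial competitors have $B\ge 0$ and are useless, so one must construct genuinely non-radial, ``cigar-shaped'' competitors whose $|u|^2$ has Fourier mass concentrated near the plane $\xi_3=0$, where $\widehat K=-\tfrac{4\pi}{3}$, pushing $B(u)$ down to nearly $(\lambda_1-\tfrac{4\pi}{3}\lambda_2)\|u\|_4^4<0$; a scaling analysis of $\max_t E(u^t)$ for such competitors shows they beat $\tfrac{\SSS^{3/2}}{3}$ for large mass. Together with monotonicity and continuity of $c\mapsto\gamma(c)$ this gives $c^*:=\sup\{c:\gamma(c)=\tfrac{\SSS^{3/2}}{3}\}\in[0,\infty)$, with $\gamma$ equal to $\tfrac{\SSS^{3/2}}{3}$ on $(0,c^*]$ and strictly below it on $(c^*,\infty)$, and compactness exactly for $c>c^*$. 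In both cases the minimiser solves \eqref{pde} with a Lagrange multiplier $\beta_c$, and subtracting the Nehari identity from $Q(u_c)=0$ yields $\beta_c c=-\tfrac14 B(u_c)$, so $\beta_c>0$; positivity of $u_c$ follows by passing to $|u_c|$ and the strong maximum principle. I expect the true obstacles to be precisely these: the critical-exponent compactness analysis with the extra nonlocal term, and, for (ii), the explicit anisotropic test functions and the scaling book-keeping that locates $c^*$.

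Parts (iii) and (iv) are corollaries of (a). For (iii): the minimiser of (ii) has $E(u_c)=\gamma(c)<\tfrac{\SSS^{3/2}}{3}$, while any nontrivial solution with $B\ge 0$ has $E\ge\tfrac{\SSS^{3/2}}{3}$; a translate of $u_c$ that were radial would have $B(u_c)=\lambda_1\|u_c\|_4^4\ge 0$, a contradiction, so no translate of $u_c$ is radial. For (iv): a nontrivial $H^1$ solution lies in $V(c)$ with $c=\|u\|_2^2$ and satisfies $\beta c=-\tfrac14 B(u)$; in the stable regime $B(u)>0$ for every $u\ne 0$, so $\beta<0$ and, by (a), $E(u)>\tfrac{\SSS^{3/2}}{3}$. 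One then rules out such a solution by a dedicated argument: elliptic bootstrap makes $u$ a classical, decaying solution with $K*|u|^2=O(|x|^{-3})$, so $u$ would be an $L^2$-eigenfunction at the positive energy $-\beta$ of a short-range Schrödinger operator, which is impossible by a Kato--Agmon--Simon absence-of-embedded-eigenvalues argument — or, equivalently, the sharp Sobolev/Gagliardo--Nirenberg inequalities force any candidate saturating the relevant estimates to be a rescaled Aubin--Talenti bubble, which is not in $L^2(\R^3)$; hence $u\equiv 0$.
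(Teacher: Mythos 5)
Your overall route is the paper's route: minimise $E$ on the Pohozaev set $V(c)$, use the fibre map $t\mapsto E(u^t)$ to identify $\gamma(c)$ with a mountain-pass level, use $E=\tfrac13A+\tfrac18B$ on $V(c)$ together with Sobolev to show that $B\ge 0$ forces $E\ge\tfrac{\SSS^{3/2}}{3}$, and use strict inequality $\gamma(c)<\tfrac{\SSS^{3/2}}{3}$ as the compactness threshold; for (i) your observation that the dipolar energy vanishes on radial functions, so that radial competitors reduce the problem to Soave's CQNLS bound, is exactly the paper's argument. The genuinely different (and in places better) choices are these. For compactness, the paper does not work with a bare minimising sequence: it builds a Palais--Smale sequence with asymptotically vanishing virial via Ghoussoub's minimax principle on the family of singletons, so that the limit automatically solves \eqref{pde} (with the $Q$-multiplier shown to vanish separately), and it excludes vanishing via the $pqr$-lemma of Fr\"ohlich--Lieb--Loss plus Lieb's translation lemma -- precisely because Strauss compactness is unavailable. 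Your ``translate and run a profile decomposition'' is workable but this is where the actual content lies; in particular you still need an argument that a minimising sequence on $V(c)$ yields a PS sequence (Ekeland on $V(c)$, or Ghoussoub as in the paper). For (ii), the paper does not build anisotropic bubbles: it takes a minimising sequence for the Gagliardo--Nirenberg-type quotient $\|u\|_2\|\nabla u\|_2^3/(-B(u))$ over $\{B<0\}$ (nonempty in the unstable regime by Bellazzini--Jeanjean), rescales to mass $n^2$, and shows the fibre maximum tends to $0$, i.e.\ $\gamma(c)\to 0$; this is cleaner than your cigar-shaped competitors and, as written, your construction only covers the subcase $\lambda_2>0$ (for $\lambda_2<0$ one must concentrate $\widehat{|u|^2}$ near the $\xi_3$-axis where $\widehat K=\tfrac{8\pi}{3}$). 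Your argument for (iii) is different from the paper's (which argues that a radial solution would solve CQNLS with $\lambda_1\ge0$, impossible by Soave) and is arguably cleaner for the statement as written, since it only uses translation invariance of $B$, the vanishing of the dipolar energy on radials, and $\gamma(c)<\tfrac{\SSS^{3/2}}{3}$.

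The one soft spot is (iv). The paper disposes of it by noting $B(u)>0$ for all $u\neq0$ in the stable regime and invoking Soave's Theorem 1.2(2) verbatim. Of your two proposed mechanisms, the second (``saturating the sharp Sobolev inequality forces an Aubin--Talenti bubble'') does not work as stated: the identities only give $A(u)\ge\SSS^{3/2}$ and $E(u)\ge\tfrac{\SSS^{3/2}}{3}$, not equality, so nothing is saturated and no rigidity is available. The first mechanism (from $4\beta\|u\|_2^2=-B(u)<0$ one gets $\beta<0$, so $u$ would be an $L^2$-eigenfunction at positive energy of a Schr\"odinger operator with decaying potential, contradicting absence of embedded eigenvalues) is viable and is essentially what underlies the cited nonexistence result, but you must first establish enough decay of $u$ (hence of $\lambda_1|u|^2+\lambda_2K*|u|^2-|u|^4$) to apply a Kato-type theorem, which requires a preliminary bootstrap; as written this step is circularly ordered. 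Everything else is a matter of filling in the acknowledged computations.
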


We make a couple of comments on Theorem \ref{ground state}: the quintic potential $|u|^4 u$ is
energy-critical in 3D. Moreover, since $K$ has vanishing sphere integral, it is in fact a
Calderon-Zygmund kernel and therefore a bounded mapping from $L^p$ to $L^p$ for all
$p\in(1,\infty)$. Hence \eqref{pde} can be seen as the focusing energy-critical NLS perturbed by a
cubic-like lower order term. The similar cubic-quintic model
\begin{align}\label{cqnls}
-\Delta u+\beta u+\ld_1|u|^2 u-|u|^4u=0
\end{align}
has been recently studied by Soave in \cite{SoaveCritical} (in fact, a general class of focusing
energy-critical NLS with combined powers including \eqref{cqnls} was studied therein). In
particular, Soave proved that for any $\ld_1<0$ and $c>0$, $\gamma(c)$ defined by
\eqref{variational gamma c} (corresponding to \eqref{cqnls}) has a normalized positive, radially
symmetric ground state $u_c\in S_c$ and $\gamma(c)\in \big(0,\frac{\SSS^{\frac32}}{3}\big)$,
where $\SSS$ is the best constant for the Sobolev inequality:
\begin{align*}
\SSS=\inf_{u\in\mathcal{D}^{1,2}(\R^3)}\frac{\|\nabla u\|_2^2}{\|u\|_6^2}.
\end{align*}
On the contrary, for $\ld_1\geq 0$, Soave showed that \eqref{cqnls} has no non-trivial
solution\footnote{This was in fact originally shown in the case $\ld_1>0$, but the proof extends
verbatim to $\ld_1\geq 0$.}.

It remains an open problem whether there exist normalized ground states of \eqref{pde} for
$c\in(0,c^*)$ in the case where $(\ld_1,\ld_2)$ is unstable and $\ld_1\geq 0$. As it will become
clear from the proof of Theorem \ref{ground state}, the existence of ground states is, loosely
speaking, equivalent to showing that $\gamma(c)$ is strictly smaller than
$\frac{\SSS^{\frac32}}{3}$. In the case of \eqref{cqnls}, Soave showed this directly by constructing
a sequence of test functions that asymptotically meets the assumption, where the construction of the
test functions is based on certain delicate cut-off refinement of the radial Aubin-Talenti ground
states. However, the radial symmetry is principally incompatible with the dipolar potential and
Soave's arguments can not be utilized for finding a reasonable minimizing sequence in the case
$\ld_1\geq 0$. In view of such consideration, we conjecture that in the case of Theorem \ref{ground
state} (ii), the number $c^*$ is strictly positive and there is no ground states of $\gamma(c)$ for
all $c\in(0,c^*]$. We note, however, that this is not contradicting Theorem \ref{ground state} (ii),
since for large mass we can find a minimizing sequence without invoking the radial symmetry at all,
while simultaneously the gradient and quintic potential energies remain controllable.

The rest of the paper is organized as follows: In Section \ref{sec:prelim} we provide some auxiliary lemmas which will be useful for proving the main results. In Section \ref{sec:proof main theorem} we prove our main results.

\section{Preliminaries}\label{sec:prelim}
In this section we collect some useful auxiliary results which will be later used in the proof of Theorem \ref{ground state}.
\subsection{Pohozaev identity}
We begin with the well-known Pohozaev identity.
\begin{lemma}\label{lemma pohozaev}
Let $u$ be a solution of \eqref{pde}. Then
\begin{align}
A(u)+3\beta\|u\|_2^2+\frac{3}{2}B(u)-C(u)=0.\label{pohozaev}
\end{align}
Consequently, we have
\begin{align}
4\beta\|u\|_2^2&=-B(u),\label{beta bu}\\
Q(u)&=0\label{pohozaev3}.
\end{align}
\end{lemma}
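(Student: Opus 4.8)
The plan is to derive all three identities from two independent tests of \eqref{pde}: pairing with $u$, and pairing with the dilation generator $x\cdot\nabla u$. Pairing \eqref{pde} with $\bar u$ and integrating over $\R^3$ — using $\int_{\R^3}(-\Delta u)\bar u=\|\nabla u\|_2^2$ and the self-adjointness of convolution with the even kernel $K$ — gives the Nehari-type identity
\[
A(u)+\beta\|u\|_2^2+B(u)-C(u)=0 .
\]
Pairing \eqref{pde} with $x\cdot\nabla\bar u$ and taking real parts gives the Pohozaev identity \eqref{pohozaev}: after one integration by parts the Laplacian term contributes $\tfrac{2-3}{2}\|\nabla u\|_2^2=-\tfrac12 A(u)$ (the dimension being $3$), while each local nonlinear term is put in divergence form — for instance $\lambda_1\operatorname{Re}\big(|u|^2u\,(x\cdot\nabla\bar u)\big)=\tfrac{\lambda_1}{4}\,x\cdot\nabla(|u|^4)$ — so that integrating against the vector field $x$ (whose divergence is $3$) produces the factors $-\tfrac32$, $-\tfrac34$, $+\tfrac12$ in front of $\beta\|u\|_2^2$, $\lambda_1\|u\|_4^4$, $C(u)$ respectively; together with the dipolar contribution computed in the next paragraph, collecting all terms and multiplying by $-2$ yields exactly \eqref{pohozaev}.

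The only non-routine ingredient is the dipolar term. With $f:=|u|^2$ one has $\operatorname{Re}\big((K*f)u\,(x\cdot\nabla\bar u)\big)=\tfrac12(K*f)\,(x\cdot\nabla f)$, and the key point is that
\[
\int_{\R^3}(K*f)\,(x\cdot\nabla f)\,dx=-\tfrac32\int_{\R^3}(K*f)\,f\,dx ,
\]
so this term contributes $-\tfrac34\lambda_2\int_{\R^3}(K*|u|^2)|u|^2\,dx$, merging with the cubic contribution into $-\tfrac34 B(u)$. This identity holds because $K$ is even and homogeneous of degree $-3$: integrating by parts in $x$ one meets $x\cdot\nabla_xK(x-y)=-3K(x-y)+y\cdot\nabla_xK(x-y)$ by Euler's relation, and the leftover term is symmetrized away using the oddness of $\nabla K$ together with the $x\leftrightarrow y$ symmetry of $f(x)f(y)K(x-y)$. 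Equivalently, one may simply note that $\int_{\R^3}(K*|u|^2)|u|^2\,dx$ scales like $\mu^{-3}$ under $u\mapsto u(\mu\,\cdot)$ and differentiate at $\mu=1$.

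The two stated consequences are then linear algebra: subtracting the Nehari identity from \eqref{pohozaev} eliminates $A(u)$ and $C(u)$ and leaves $2\beta\|u\|_2^2+\tfrac12 B(u)=0$, which is \eqref{beta bu}; substituting $\beta\|u\|_2^2=-\tfrac14 B(u)$ back into the Nehari identity gives $A(u)+\tfrac34 B(u)-C(u)=0$, which is \eqref{pohozaev3}.

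The main obstacle is analytic, not algebraic: $x\cdot\nabla u\notin H^1(\R^3)$, so the integrations by parts must be justified. Since $K$ is a Calderon-Zygmund kernel, $|u|^2\mapsto K*|u|^2$ maps $L^p(\R^3)$ to itself for every $1<p<\infty$, and an elliptic bootstrap on \eqref{pde} yields $u\in W^{2,p}_{\mathrm{loc}}(\R^3)$ for all finite $p$ plus enough decay at infinity; the Pohozaev computation is then legitimized by the usual truncation device (testing against $\chi(x/R)\,(x\cdot\nabla\bar u)$ and letting $R\to\infty$), the nonlocal term needing a little extra care with the tails of $K*|u|^2$. Alternatively one can avoid the $x\cdot\nabla u$ multiplier altogether: a solution of \eqref{pde} is a constrained critical point of $E$ on $S(\|u\|_2^2)$, the rescaling $u^t$ of \eqref{def l2 scaling} preserves the $L^2$-norm, and since $A(u^t)=t^2A(u)$, $B(u^t)=t^3B(u)$, $C(u^t)=t^6C(u)$ one gets $\tfrac{d}{dt}\big|_{t=1}E(u^t)=A(u)+\tfrac34 B(u)-C(u)=0$; the remaining identities \eqref{beta bu} and \eqref{pohozaev} then follow by combining with the Nehari identity.
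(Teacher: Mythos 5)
Your proposal is correct and follows the paper's proof exactly: test \eqref{pde} against $\bar u$ to get the Nehari identity $A(u)+\beta\|u\|_2^2+B(u)-C(u)=0$, test against $x\cdot\nabla\bar u$ to get \eqref{pohozaev}, and eliminate to obtain \eqref{beta bu} and \eqref{pohozaev3}; your coefficients, including the $-\tfrac32$ factor for the dipolar term coming from the degree $-3$ homogeneity of $K$, all check out. The only difference is that you supply the details of the dilation computation and its justification, which the paper delegates to \cite{LuoStylianouJMAA}.
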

\begin{proof}
\eqref{pohozaev} follows generally by multiplying \eqref{pde} with $x\cdot\nabla \bar u$ and then
integrating by parts (for more details see \cite{LuoStylianouJMAA}). On the other hand, by
multiplying \eqref{pde} with $\bar{u}$ and integrating by parts we obtain
\begin{align}\label{pohozaev2}
A(u)+\beta\|u\|_2^2+B(u)-C(u)=0.
\end{align}
Eliminating $A(u)$ and $C(u)$ in \eqref{pohozaev} and \eqref{pohozaev2} yields \eqref{beta bu}; Eliminating $\beta\|u\|_2^2$ in \eqref{pohozaev} and \eqref{pohozaev2} yields \eqref{pohozaev3}.
\end{proof}

\subsection{Characterization of optimizers of $\gamma(c)$}
In the following, we show that any optimizer of $\gamma_c$ is automatically a solution of \eqref{pde}.
\begin{lemma}\label{vc implies sc}
Let $c>0$. If $\gamma(c)$ is attained at some $u\in V(c)$, then $u$ solves \eqref{pde}.
\end{lemma}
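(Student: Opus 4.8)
The plan is to use a Lagrange-multiplier argument adapted to the constrained minimization problem. The key point is that the minimization is over $V(c) = S(c) \cap \{Q(u) = 0\}$, so a priori a minimizer $u$ satisfies an Euler--Lagrange equation with \emph{two} multipliers: one, say $\beta$, for the mass constraint $\|u\|_2^2 = c$, and one, say $\mu$, for the constraint $Q(u) = 0$. Writing out the stationarity of $E$ on $V(c)$ thus gives, in the weak sense,
\begin{align*}
E'(u) + \beta\, (\tfrac{1}{2}\|\cdot\|_2^2)'(u) + \mu\, Q'(u) = 0,
\end{align*}
which after computing the derivatives becomes an equation of the form
\begin{align*}
-(1+2\mu)\Delta u + \beta u + (1+\tfrac32\mu)\big(\ld_1|u|^2u+\ld_2(K*|u|^2)u\big) - (1+6\mu)|u|^4u = 0.
\end{align*}
The goal is then to show $\mu = 0$, so that this collapses to \eqref{pde}.

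First I would verify that the Lagrange multiplier rule actually applies: one must check that the constraint functionals $u \mapsto \|u\|_2^2$ and $u \mapsto Q(u)$ have linearly independent differentials at the minimizer $u$ (equivalently, $u$ is not a critical point of $Q$ restricted to $S(c)$). This is where one uses the $L^2$-invariant scaling $u^t(x) = t^{3/2}u(tx)$ from \eqref{def l2 scaling}: the map $t \mapsto Q(u^t)$ can be computed explicitly as $A(u)t^2 + \tfrac34 B(u)t - C(u)t^3$ (each term scales with a definite power of $t$), and differentiating in $t$ at $t=1$ shows $\frac{d}{dt}\big|_{t=1} Q(u^t) = 2A(u) + \tfrac34 B(u) - 3C(u)$. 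Since $Q(u) = 0$ gives $A(u) + \tfrac34 B(u) - C(u) = 0$, this derivative equals $A(u) - 2C(u)$; one then argues this cannot vanish (e.g. $A(u) = 2C(u)$ together with $Q(u)=0$ would force $B(u) = -\tfrac{4}{3}A(u) < 0$ and then, combining with the constraint, one gets a contradiction with the Sobolev inequality $\SSS\|u\|_6^2 \le \|\nabla u\|_2^2$ — or, more cleanly, $u^t$ would then be a genuine variation transverse to $S(c)$). Hence the multiplier rule is valid and $u$ solves the two-parameter equation above.

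Next I would extract the Pohozaev-type and mass identities for the two-parameter equation, exactly as in Lemma \ref{lemma pohozaev}: testing with $\bar u$ and with $x\cdot\nabla\bar u$, and also using that $Q(u) = 0$ is built into the constraint. Comparing the resulting relations with the definition of $Q$ forces a linear relation among $A(u), B(u), C(u)$ that, combined with $Q(u) = 0$, pins down $\mu = 0$. Concretely, the dilation identity for the two-parameter equation yields $\frac{d}{dt}\big|_{t=1}E(u^t) = -\mu\,\frac{d}{dt}\big|_{t=1}Q(u^t)$; but since $u$ minimizes $E$ on $V(c)$ and (by the previous step) $t\mapsto u^t$ stays transverse to $V(c)$ only in a controlled way, one shows $\frac{d}{dt}\big|_{t=1}E(u^t)$ and $Q(u) = 0$ together are consistent only with $\mu = 0$. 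The main obstacle is precisely this transversality/non-degeneracy check — ensuring that at the minimizer the two constraints are independent so that the multiplier rule delivers a clean equation, and that the fibering map $t \mapsto Q(u^t)$ is non-stationary, which is what ultimately kills the extra multiplier. Once $\mu = 0$, the displayed equation is exactly \eqref{pde} with the multiplier $\beta$, completing the proof.
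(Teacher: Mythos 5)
Your overall route is the same as the paper's: introduce Lagrange multipliers for both constraints, write the resulting Euler--Lagrange equation, and combine the Pohozaev (dilation) identity with $Q(u)=0$ to force the multiplier attached to $Q$ to vanish. Your extra concern about constraint qualification (that $Q'(u)$ is nonzero on the tangent space of $S(c)$, so the multiplier rule applies with coefficient $1$ in front of $E'(u)$) is a legitimate point that the paper glosses over, and it is settled by the very same computation. However, two computations in your write-up are wrong, and one of them leaves a genuine hole as written. First, under the scaling \eqref{def l2 scaling} one has $A(u^t)=t^2A(u)$, $B(u^t)=t^3B(u)$ and $C(u^t)=t^6C(u)$, so
\begin{equation}
Q(u^t)=t^2A(u)+\tfrac34 t^3B(u)-t^6C(u),
\end{equation}
not $t^2A+\tfrac34 tB-t^3C$. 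Consequently $\tfrac{d}{dt}\big|_{t=1}Q(u^t)=2A(u)+\tfrac94B(u)-6C(u)$, which upon substituting $\tfrac34B(u)=C(u)-A(u)$ (i.e.\ $Q(u)=0$) equals $-(A(u)+3C(u))<0$. This is automatically nonzero, so the transversality you were unsure how to establish (your claim that $A(u)-2C(u)\neq 0$, which you never actually prove and which need not hold) becomes immediate once the exponents are corrected. Second, the cubic coefficient in your Euler--Lagrange equation should be $1+3\mu$, not $1+\tfrac32\mu$, since $\tfrac34B'(u)=3\big(\lambda_1|u|^2+\lambda_2(K*|u|^2)\big)u$. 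With these corrections your final step closes cleanly and reproduces the paper's argument: testing the equation against $\tfrac32\bar u+x\cdot\nabla\bar u$ and using that the $L^2$-norm is invariant under \eqref{def l2 scaling} gives
\begin{equation}
0=\tfrac{d}{dt}\big|_{t=1}E(u^t)+\mu\,\tfrac{d}{dt}\big|_{t=1}Q(u^t)=Q(u)-\mu\big(A(u)+3C(u)\big)=-\mu\big(A(u)+3C(u)\big),
\end{equation}
where the identity $\tfrac{d}{dt}\big|_{t=1}E(u^t)=Q(u)=0$ is Lemma \ref{monotoneproperty}\,(i); hence $\mu=0$ and the equation reduces to \eqref{pde}. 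This is exactly the paper's elimination of $\mu_1$ via $\mu_1(A(u)+3C(u))=0$, so the approach is sound, but as submitted the non-degeneracy step is both miscomputed and unproven.
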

\begin{proof}
Suppose that $\gamma(c)$ is attained at $u \in V(c)$. Then in view of the Lagrange multiplier theorem, there exist $\mu_1,\mu_2\in\C$ such that
\begin{align*}
E'(u)-\mu_1Q'(u)-2\mu_2 u=0,
\end{align*}
or equivalently
\begin{align}\label{pde on Vc}
(1-2\mu_1)(-\Delta u )+(1-3\mu_1)(\lambda_1|u |^2+\lambda_2(K*|u |^2))u +(6\mu_1-1)|u |^4u -2\mu_2 u =0.
\end{align}
The Pohozaev identity corresponding to \eqref{pde on Vc} is given by
\begin{align}
0=(1-2\mu_1)A(u )+\frac{3}{4}(1-3\mu_1)B(u )+(6\mu_1-1)C(u).\label{pde on Vc 2}
\end{align}
Eliminating $B(u)$ in \eqref{pde on Vc} and \eqref{pde on Vc 2} and using the fact that $Q(u)=0$, we infer that
\begin{align*}
\mu_1(A(u )+3C(u ))=0.
\end{align*}
Since $A(u )+3C(u )>0$, we know that $\mu_1 =0$ and the proof is complete.
\end{proof}

\subsection{The energy landscape along the $L^2$-invariant scaling}
For any function $u\in H^1(\R^3)$, we recall that $u^t$ is the $L^2$-invariant scaling of $u$
defined by \eqref{def l2 scaling}. The following lemma shows that we can always find some $t^*>0$
such that $u^{t^*}\in V(c)$ is a local maximum of the mapping $t\mapsto E(u^t)$. Thus $u^{t^*}$ can
be viewed as the peak of a mountain pass.
\begin{lemma}\label{monotoneproperty}
Let $c>0$ and $u\in S(c)$. Then:
\begin{enumerate}
\item[(i)] $\displaystyle\frac{\partial}{\partial t}E(u^t)=\frac{Q(u^t)}{t}$, for all $t>0$.
\item[(ii)] There exists a $t^*>0$ such that $u^{t^*}\in V(c)$.
\item[(iii)] We have $t^*(u)<1$ if and only if $Q(u)<0$. Moreover, $t^*(u)=1$ if and only if $Q(u)=0$.
\item[(iv)] The following inequalities hold:
\begin{equation*}
Q(u^t) \left\{
\begin{array}{lr}
             >0, &t\in(0,t^*(u)) ,\\
             <0, &t\in(t^*(u),\infty).
             \end{array}
\right.
\end{equation*}
\item[(v)] $E(u^t)<E(u^{t^*})$ for all $t>0$ with $t\neq t^*$.
\end{enumerate}
\end{lemma}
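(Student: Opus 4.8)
The plan is to compute everything explicitly in terms of the scaling exponents and then read off all five claims from elementary calculus on a single-variable function. First I would record how each quantity transforms under $u \mapsto u^t$: since $u^t(x) = t^{3/2}u(tx)$, a change of variables gives $\|u^t\|_2^2 = \|u\|_2^2$ (so $u^t \in S(c)$ for all $t$), $A(u^t) = t^2 A(u)$, $C(u^t) = t^3 C(u)$, and $B(u^t) = t^{3}B(u)$ — here one uses that $\|u^t\|_4^4$ scales like $t^3$ and that the dipolar term $\int (K*|u^t|^2)|u^t|^2\,dx$ also scales like $t^3$, because $K$ is homogeneous of degree $-3$ and the convolution integral in $\R^3$ contributes the matching power (equivalently, work on the Fourier side where $\widehat{K}$ is homogeneous of degree $0$). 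Consequently
\begin{align*}
E(u^t) &= \tfrac12 t^2 A(u) + \tfrac14 t^3 \lambda_1\|u\|_4^4 + t^3\lambda_2\!\int_{\R^3}(K*|u|^2)|u|^2\,dx - \tfrac16 t^3 C(u)\\
&= \tfrac12 t^2 A(u) + \tfrac14 t^3 B(u) - \tfrac16 t^3 C(u),
\end{align*}
and similarly $Q(u^t) = t^2 A(u) + \tfrac34 t^3 B(u) - t^3 C(u)$. Differentiating the displayed formula for $E(u^t)$ gives $\frac{\partial}{\partial t}E(u^t) = t A(u) + \tfrac34 t^2 B(u) - \tfrac12 t^2 C(u) = \tfrac1t\big(t^2 A(u) + \tfrac34 t^3 B(u) - t^3 C(u)\big) = \frac{Q(u^t)}{t}$, which is (i).

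Next I would set $a := A(u) > 0$ (nonzero because $u \in S(c)$ is not identically zero) and $d := C(u) - \tfrac34 B(u)$, so that $Q(u^t) = t^2(a - t\,d)$ for $t>0$. The key observation is that $d > 0$: indeed one has $C(u) > \tfrac34 B(u)$ whenever $u \neq 0$, which follows from the Gagliardo–Nirenberg/Sobolev-type control of $B(u)$ and the definition; more directly, if $d \le 0$ then $Q(u^t) = t^2(a - td) \ge t^2 a > 0$ for all $t > 0$, so $u^t$ never lies in $V(c)$ and $E(u^t) \to +\infty$, contradicting the expected mountain-pass geometry — but since we only need existence of $t^*$ for part (ii), I should argue positivity of $d$ honestly. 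Here is the clean route: from the energy-critical structure, $C(u) \le \SSS^{-3} A(u)^3$ is not what is needed; rather, the relevant fact is that $u \in S(c)$ with $u \not\equiv 0$ forces $C(u) > 0$, and one must rule out $C(u) - \tfrac34 B(u) \le 0$. If $\lambda_1 \le 0$ and $\lambda_2 \le 0$ this is immediate since then $B(u) \le 0 < C(u)$; in the remaining sign cases one invokes the Hölder/Calderón–Zygmund bound $|B(u)| \lesssim \|u\|_4^4$ together with $\|u\|_4^4 \le \|u\|_6^{?}\|u\|_2^{?}$ interpolation — I would phrase this as: by the interpolation $\|u\|_4^4 \le \|u\|_2^{?}\|u\|_6^{?}$ and $\widehat K \in L^\infty$, $B(u) = o(C(u))$ as $C(u)\to\infty$ along the scaling, which is enough. (In the paper's actual context the sign hypotheses in Definition \ref{def stable} likely pin this down cleanly, so I would cite the appropriate preliminary estimate.)

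Granting $a, d > 0$, everything else is one-variable calculus on $t \mapsto E(u^t) = \tfrac12 t^2 a - \tfrac16 t^3 d$ (after substituting $B(u) = \tfrac43(C(u)-d)$, or more transparently just using $\frac{\partial}{\partial t}E(u^t) = t(a - td)$ from part (i) with $Q(u^t)=t^2(a-td)$). Set $t^* := a/d > 0$. Then $Q(u^{t^*}) = (t^*)^2(a - t^* d) = 0$, proving (ii); $Q(u^t) = t^2(a-td)$ is $>0$ for $t < t^*$ and $<0$ for $t > t^*$, proving (iv); since $t^* = A(u)/(C(u)-\tfrac34 B(u))$ and $Q(u) = a - d$ (the $t=1$ value divided by $t^2$... more precisely $Q(u) = A(u) + \tfrac34 B(u) - C(u) = a - d$), we get $t^* < 1 \iff a < d \iff Q(u) < 0$ and $t^* = 1 \iff a = d \iff Q(u) = 0$, proving (iii); and $\frac{\partial}{\partial t}E(u^t) = t(a-td)$ has the sign of $(t^*-t)$, so $E(u^\cdot)$ strictly increases on $(0,t^*)$ and strictly decreases on $(t^*,\infty)$, giving the strict global maximum in (v). The only genuine obstacle is the positivity claim $C(u) > \tfrac34 B(u)$ for nonzero $u$; once that is secured (via the sign analysis of Definition \ref{def stable} combined with the Calderón–Zygmund boundedness of $K*$ on $L^2$ and Sobolev embedding), the rest is bookkeeping.
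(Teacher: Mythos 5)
There is a fatal computational error at the very start: the quintic term does not scale like $t^3$. With $u^t(x)=t^{3/2}u(tx)$ one has $C(u^t)=\|u^t\|_6^6=t^9\cdot t^{-3}\|u\|_6^6=t^6C(u)$, not $t^3C(u)$ (the $t^3$ law holds for $\|\cdot\|_4^4$ and for the dipolar term, but not for $\|\cdot\|_6^6$). Your own computation already betrays this: you differentiate $-\tfrac16 t^3C(u)$ to get $-\tfrac12 t^2C(u)$, but then claim this equals $\tfrac1t\cdot(-t^3C(u))=-t^2C(u)$; the two disagree by a factor of $2$, so identity (i) fails even under your own scaling. Identity (i) holds precisely because the correct expressions are $E(u^t)=\tfrac12 t^2A+\tfrac14t^3B-\tfrac16t^6C$ and $Q(u^t)=t^2A+\tfrac34t^3B-t^6C$: each term $\tfrac1k t^k$ in $E(u^t)$ differentiates to $t^{k-1}=\tfrac1t\,t^k$, matching the corresponding term $t^k$ in $Q(u^t)$.

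Because of this, the reduction to $Q(u^t)=t^2(a-td)$ with $d=C-\tfrac34B$ is wrong, and the rest of the argument collapses. The correct function is $y(t):=\frac{\partial}{\partial t}E(u^t)=tA(u)+\tfrac34t^2B(u)-t^5C(u)$, which is not of the form $t(a-td)$, and its unique sign change is not immediate. The paper establishes it by computing $y''(t)=\tfrac32B(u)-20t^3C(u)$ and splitting into the cases $B(u)\le0$ (where $y'$ is strictly decreasing on $(0,\infty)$) and $B(u)>0$ (where $y''$ changes sign exactly once); in either case $y'(0)=A(u)>0$ and $y'(t)\to-\infty$, so $y'$ has a single zero, hence $y$ increases then decreases, starts from $0$, tends to $-\infty$, and therefore vanishes exactly once on $(0,\infty)$. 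This case analysis on the sign of $B(u)$ is the actual content of the lemma and is entirely absent from your write-up. Incidentally, the auxiliary claim you struggle with, namely $C(u)>\tfrac34B(u)$ for every nonzero $u$, is (a) not needed once the scaling is corrected and (b) not true in general for a fixed $u$ (take $\lambda_1$ large and positive and $u$ with $\|u\|_4^4$ large relative to $\|u\|_6^6$); what is true, and what the correct exponent $t^6$ encodes, is that the quintic term dominates as $t\to\infty$ along the scaling.
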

\begin{proof}
(i) follows from direct calculation. Next, define $y(t):= \frac{\partial}{\partial t}E(u^t)$. Then
\begin{align*}
y(t)&=tA(u)+\frac{3}{4}t^2B(u)-t^5C(u),\\
y'(t)&=A(u)+\frac{3t}{2}B(u)-5t^4C(u),\\
y''(t)&=\frac{3}{2}B(u)-20t^3C(u).
\end{align*}
If $B(u)\leq 0$, then $y''(t)$ is negative on $(0,\infty)$; If $B(u) >0$, then $y''(t)$ is positive on $(0,(\frac{3B}{40C})^{\frac{1}{3}})$ and negative on $(-(\frac{3B}{40C})^{\frac{1}{3}},\infty)$. Since $y'(0)=A(u)>0$ and $y'(t)\to -\infty$ as $t\to \infty$, we conclude simultaneously from both cases that there exists a $t_0>0$ such that $y'(t)$ is positive on $(0,t_0)$ and negative on $(t_0,\infty)$. From the expression for $y(t)$ we obtain that $\lim_{t\searrow 0^+}y(t)=0$ and $\displaystyle \lim_{t\to\infty}y(t)=-\infty$. Thus $y(t)$ has a zero at $t^*>t_0$, $y(t)$ is positive on $(0,t^*)$ and $y(t)$ is negative on $(t^*,\infty)$. Since $y(t)=\frac{\partial E(u^t)}{\partial t}=\frac{Q(u^t)}{t}$, (ii) and (iv) are shown. For (iii), we first let $Q(u)<0$. Then
$$0>Q(u)=\frac{Q(u^1)}{1}=y(1),$$
which is only possible if $t^*<1$. Conversely, let $t^*<1$. Then
$$Q(u)=y(1)<y(t^*)<0. $$
This completes the proof of (iii). To see (v), we use that
$$ E(u^{t^*})=E(u^t)+\int_t^{t^*}y(s)\,ds.$$
Then (v) follows from the fact that $y(t)$ is positive on $(0,t^*)$ and negative on $(t^*,\infty)$.
\end{proof}

\subsection{Palais-Smale sequences with vanishing virial}
In this subsection we prove the existence of a bounded Palais-Smale (PS) sequence. Since $E(u)$ is
unbounded below on $S(c)$ (which can be easily verified using H\"older's inequality), the
boundedness of a PS-sequence does not directly follow from the mountain pass geometry. Nevertheless,
by Lemma \ref{vc implies sc} we will be seeking optimizers on the manifold $V(c)$ containing
functions $u$ with vanishing virial $Q(u)$, from which the boundedness of the PS-sequence follows.
Hence, the problem reduces to finding a PS-sequence with vanishing virial. To show this, we firstly
introduce the following definition of \textit{homotopy-stable family}:
\begin{definition}[Homotopy-stable family, \protect{\cite[Def. 3.1]{Ghoussoub1993}}]
Let $B$ be a closed subset of a metric space $X$. We say that a class $\mathcal{F}$ of compact subsets
of $X$ is a homotopy-stable family with closed boundary $B$ if
\begin{itemize}
\item[(i)] $B$ is contained in every set in $\mathcal{F}$ contains $B$;
\item[(ii)] For any $A \in\mathcal{F}$ and any $\eta \in C([0,1] \times X,X )$ satisfying $\eta(t,x) = x$ for all $(t,x) \in(\{0\}\times X ) \cup([0,1] \times B)$, we have $\eta(\{1\} \times A) \in \mathcal{F}$.
\end{itemize}
If $B$ is empty, we call $\mathcal{F}$ a homotopy-stable family without boundary.
\end{definition}
\begin{lemma}[Existence of PS-sequence with vanishing virial]\label{ps seq}
For each $c>0$ there exists s PS-sequence $(u_n)_n\subset S(c)$ with vanishing virial, in other words $(u_n)_n$ satisfies
\begin{align*}
E(u_n)&=\gamma(c)+o(1),\\
E'(u_n)&=o(1),\\
\mathrm{dist}(u_n,V(c))&=o(1)
\end{align*}
with $o(1)=o_n(1)$ as $n\to \infty$.
\end{lemma}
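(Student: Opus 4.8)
The plan is to construct the sequence by a mountain-pass argument carried out for the augmented functional on $\R\times S(c)$ --- Jeanjean's device, in the form used by Soave \cite{SoaveCritical} --- combined with Ghoussoub's minimax principle \cite{Ghoussoub1993}, the latter being what produces the extra ``vanishing virial'' datum $\mathrm{dist}(u_n,V(c))=o(1)$. Concretely, I would work with
\[
I:\R\times S(c)\to\R,\qquad I(s,v):=E(v^{e^s})=\frac{e^{2s}}{2}A(v)+\frac{e^{3s}}{4}B(v)-\frac{e^{6s}}{6}C(v),
\]
on the complete $C^1$-Finsler manifold $\R\times S(c)$. Two identities are the backbone of the argument, both following from Lemma~\ref{monotoneproperty}(i) and the change of variables $t=e^s$: namely $\partial_sI(s,v)=Q(v^{e^s})$, and $D_vI(s,v)[\phi]=\langle E'(v^{e^s}),\phi^{e^s}\rangle$ for $\phi\in T_vS(c)$, the scaling $\psi\mapsto\psi^{e^s}$ being an $L^2$-isometry and a linear isomorphism of $T_vS(c)$ onto $T_{v^{e^s}}S(c)$. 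By Lemma~\ref{monotoneproperty} the slice $s\mapsto I(s,v)$ has a strict global maximum at $s=\log t^*(v)$, with $v^{t^*(v)}\in V(c)$, and conversely $V(c)=\{v^{t^*(v)}:v\in S(c)\}$; hence $\gamma(c)=\inf_{v\in S(c)}\max_{s\in\R}I(s,v)$.

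Next I would set up the minimax. Given $\varepsilon>0$, choose $v_0\in S(c)$ with $\max_sI(s,v_0)<\gamma(c)+\varepsilon$ and $s_-\ll 0\ll s_+$ with $I(s_\pm,v_0)<0<\gamma(c)$; here $I(s,v_0)\to0$ as $s\to-\infty$ and $I(s,v_0)\to-\infty$ as $s\to+\infty$ since $C(v_0)>0$, and $\gamma(c)>0$ because a short elimination using $Q(u)=0$ and $C(u)\ge0$ gives $E(u)=\tfrac13A(u)+\tfrac18B(u)\ge\tfrac16A(u)$ on $V(c)$, while on $V(c)$ the Sobolev and Gagliardo--Nirenberg inequalities keep $A(u)$ bounded away from $0$. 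Let $\mathcal F$ be the family of images of continuous paths in $\R\times S(c)$ from $(s_-,v_0)$ to $(s_+,v_0)$; it is homotopy-stable with closed boundary $B=\{(s_-,v_0),(s_+,v_0)\}$ and $\sup_BI<0<\gamma(c)$. Its minimax value $\sigma:=\inf_{A\in\mathcal F}\max_AI$ satisfies $\sigma\le\max_sI(s,v_0)<\gamma(c)+\varepsilon$ (test on the straight segment in $s$), and $\sigma\ge\gamma(c)$: for any path $g(\tau)=(s(\tau),v(\tau))$ the function $\tau\mapsto Q\big(v(\tau)^{e^{s(\tau)}}\big)=\partial_sI(g(\tau))$ is positive at $\tau=0$ and negative at $\tau=1$ by Lemma~\ref{monotoneproperty}(iv), hence vanishes somewhere, where $v(\tau)^{e^{s(\tau)}}\in V(c)$ and thus $I(g(\tau))\ge\gamma(c)$. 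Consequently $\sigma\in[\gamma(c),\gamma(c)+\varepsilon)$, and letting $\varepsilon\downarrow0$ with a diagonal extraction it suffices to produce, for each such family, a bounded Palais--Smale sequence of $I$ at level $\sigma$; the ``$\mathrm{dist}(u_n,V(c))=o(1)$'' clause will come either from the localized form of Ghoussoub's principle applied with the closed set $F:=\{(s,v):Q(v^{e^s})=0\}$ (which meets every $A\in\mathcal F$ by the intermediate-value argument just used) or, more elementarily, from $Q(u_n)\to0$ below.

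Ghoussoub's principle applied to $I$ then gives $(s_n,v_n)$ with $I(s_n,v_n)\to\sigma$ and $\|dI(s_n,v_n)\|\to0$, and one may take it $o(1)$-close to a strip-confined minimizing sequence of paths, so that $s_n$ stays bounded. Setting $u_n:=v_n^{e^{s_n}}\in S(c)$, the identities of the first paragraph yield $E(u_n)=I(s_n,v_n)\to\sigma$, $Q(u_n)=\partial_sI(s_n,v_n)\to0$, and --- since $s_n$ is bounded, so that the scalings $\psi\mapsto\psi^{e^{\pm s_n}}$ have uniformly bounded operator norm --- $\|E'|_{S(c)}(u_n)\|\to0$. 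Boundedness of $(u_n)$ in $H^1(\R^3)$ follows from the energy bound together with $Q(u_n)\to0$ and $C(u_n)\ge0$: eliminating $C(u_n)$ gives $A(u_n)\le 6E(u_n)-Q(u_n)+o(1)\to 6\sigma$. Finally, $A(u_n)$ being also bounded away from $0$ (else $E(u_n)\to0\ne\sigma$), $Q(u_n)=Q(u_n^{\,1})\to0$ forces $t^*(u_n)\to1$ by the uniform monotonicity of $t\mapsto Q(u_n^{\,t})$ in Lemma~\ref{monotoneproperty}, whence $\mathrm{dist}(u_n,V(c))\le\|u_n-u_n^{\,t^*(u_n)}\|_{H^1}\to0$. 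Combined with the previous paragraph this produces the required sequence at level $\gamma(c)$.

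The step I expect to be the real obstacle is forcing the minimax level to equal $\gamma(c)$ exactly (and keeping control of the relevant scalings while doing so): because $E|_{S(c)}$ is unbounded from below, the mountain-pass geometry is visible only after the lift to $\R\times S(c)$, and with a single fixed-endpoint family one only gets $\sigma$ sandwiched between $\gamma(c)$ and $\max_sI(s,v_0)$, so the limiting/diagonal argument over $v_0$ together with the $Q$-crossing argument is genuinely needed to pin $\sigma$ to $\gamma(c)$ and to localize onto $V(c)$. The remaining items --- $C^1$-regularity and completeness of $I$ on $\R\times S(c)$, the two derivative formulas, closedness of $F$ and continuity of $t^*$, and the bookkeeping for boundedness of $(s_n)$ and for the descent --- are routine.
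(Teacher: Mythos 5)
Your route is genuinely different from the paper's. You lift to the augmented functional $I(s,v)=E(v^{e^s})$ on $\R\times S(c)$ and run a mountain-pass argument over path families with fixed endpoints, pinning the level to $\gamma(c)$ by an $\varepsilon$-sandwich plus a diagonal extraction, and localizing via the dual set $F=\{Q(v^{e^s})=0\}$. The paper instead applies Ghoussoub's minimax principle (\cite[Thm.\ 3.2]{Ghoussoub1993}) directly on $X=S(c)$ to the fiber-maximized functional $\varphi(u)=E(u^{t^{*}(u)})$, with the homotopy-stable family of \emph{singletons} and empty boundary: then $\inf_{\mathcal F}\max\varphi=\inf_{S(c)}E(u^{t^*})$ is identified with $\gamma(c)$ in two lines using Lemma~\ref{monotoneproperty}, there are no paths, no endpoint conditions and no diagonal argument, and the clause $\mathrm{dist}(u_n,V(c))=o(1)$ falls out of Ghoussoub's conclusion $\mathrm{dist}(x_n,A_n)\to0$ by taking $A_n=\{w_n\}$ with $(w_n)\subset V(c)$ a minimizing sequence. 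Your approach buys nothing extra here and costs the level-pinning and localization bookkeeping. A small slip along the way: $I(s,v_0)\to 0^{+}$ as $s\to-\infty$ because the term $\tfrac{e^{2s}}{2}A(v_0)>0$ dominates, so you cannot arrange $I(s_-,v_0)<0$; what you need, and do have, is only $I(s_{\pm},v_0)<\gamma(c)\le\sigma$.

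The one genuine gap is in the third conclusion, $\mathrm{dist}(u_n,V(c))=o(1)$ in the $H^1$ metric. Your ``more elementary'' route deduces $t^*(u_n)\to1$ from $Q(u_n)\to0$ (that part is fine, since any subsequential limit of the coefficients yields a polynomial with a unique positive zero) and then asserts $\|u_n-u_n^{t^*(u_n)}\|_{H^1}\to0$. That step requires the dilation action $t\mapsto u^t$ to be equicontinuous at $t=1$ on $H^1$-bounded sets, which is false: for $u_n=\psi(\cdot-ne_1)$ one has $\|u_n^{t}-u_n\|_{L^2}^2\to2\|\psi\|_2^2$ for any fixed $t\ne1$, and the same occurs for $t_n\to1$ with $n(t_n-1)\to\infty$; since no rate for $t^*(u_n)\to1$ is available, and lack of compactness of the PS-sequence is exactly what is unresolved at this stage, this cannot be excluded. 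The dual-set route suffers the same defect in the final transfer: closeness of $(s_n,v_n)$ to $F$ in $\R\times S(c)$ does not convert into $H^1$-closeness of $v_n^{e^{s_n}}$ to $V(c)$ without that same equicontinuity. What you do establish solidly is $Q(u_n)\to0$, which happens to be all that Lemma~\ref{thm smaller than sss} actually uses of this clause; but to obtain the metric statement as written you should argue as the paper does, i.e.\ feed a minimizing sequence lying inside $V(c)$ itself into Ghoussoub's theorem and read off $\mathrm{dist}(u_n,V(c))\le\mathrm{dist}(u_n,A_n)\to0$.
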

\begin{proof}
We define
\begin{align*}
X&:=S(c),\\
\mathcal{F}&:=\{\{u\}:u\in S(c)\},\\
B&:=\varnothing.
\end{align*}
It follows directly that $\mathcal{F}$ is a homotopy-stable family without boundary. Next, we define
\begin{align*}
\varphi(u)=E(u^{t^*}).
\end{align*}
Then
\begin{align*}
\inf_{A\in\mathcal{F}}\max_{u\in A}\varphi(u)=\inf_{u\in S(c)}E(u^{t^*})=:\gamma_2(c).
\end{align*}
We show that $\gamma(c)=\gamma_2(c)$. Since $u^{t^*}\in V(c)$, it follows that $\gamma_2(c)\geq \gamma(c)$. On the other hand, if $u\in V(c)$, then
$$E(u)\geq \inf_{v\in V(c)}E(v)=\inf_{v\in V(c)}E(v^{t^*})\geq \inf_{v\in S(c)}E(v^{t^*})=\gamma_2(c),$$
which implies that $\gamma(c)\geq \gamma_2(c)$. It is also standard to check that $S(c)$ is a $C^1$-manifold and $\varphi$ is a $C^1$ functional on $S(c)$. Thus the claim follows from \cite[Thm. 3.2]{Ghoussoub1993} by setting $A_n=\{u_n\}\in\mathcal{F}$ therein, where $(u_n)_n\subset V(c)$ is a minimizing sequence , i.e. $E(u_n)=\gamma(c)+o(1)$.
\end{proof}

The following corollary is an immediate consequence of Lemma \ref{ps seq}. The proof is standard and we refer to \cite[Prop. 4.1]{Bellazzini2013} for related arguments.
\begin{corollary}\label{ps cor}
Let $c>0$ and $(u_n)_n\subset S(c)$ be the bounded Palais-Smale sequence constructed in Lemma \ref{ps seq}. Then there exist $u\in H^1(\R^3,\C)$, $\beta\in\mathbb R$, a (not relabeled) subsequence $(u_n)_n$ and a sequence $(\beta_n)_n\subset\R$ such that:
\begin{enumerate}
\item $u_n\rightharpoonup u$ in $H^1(\R^3)$.
\item $\displaystyle \beta_n\to\beta$ in $\R$.
\item $-\Delta u_n+\beta_nu_n +\lambda_1|u_n|^2\,u_n+\lambda_2 (K*|u_n|^2)u_n-|u_n|^4u_n\to 0$ in $H^{-1}(\R^3)$.
\item $-\Delta u+\beta u +\lambda_1|u|^2u+\lambda_2 (K*|u|^2)u-|u|^4u= 0$ in $H^{-1}(\R^3)$.
\end{enumerate}
\end{corollary}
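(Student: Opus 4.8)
The plan is to read off all four assertions from the bounded Palais-Smale sequence $(u_n)_n\subset S(c)$ supplied by Lemma~\ref{ps seq}, using only standard weak-compactness together with the constrained Euler-Lagrange equation. First I would record that $(u_n)_n$ is bounded in $H^1(\R^3)$: combining $E(u_n)=\gamma(c)+o(1)$ with $Q(u_n)=o(1)$ (the latter because $\mathrm{dist}(u_n,V(c))=o(1)$), one obtains $A(u_n)+C(u_n)=6E(u_n)-2Q(u_n)=6\gamma(c)+o(1)$, so $\|\nabla u_n\|_2$ is bounded and, together with $\|u_n\|_2^2=c$, the whole sequence is bounded in $H^1$. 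Passing to a (not relabeled) subsequence gives $u_n\rightharpoonup u$ in $H^1(\R^3)$, which is assertion~(1). For (2) and (3), I would apply the Lagrange multiplier rule on the $C^1$-manifold $S(c)$: vanishing of the constrained differential of $E$ at $u_n$ means that, putting
\begin{align*}
\beta_n:=-\tfrac1c\,\langle E'(u_n),u_n\rangle=-\tfrac1c\bigl(A(u_n)+B(u_n)-C(u_n)\bigr),
\end{align*}
one has $E'(u_n)+\beta_nu_n\to 0$ in $H^{-1}(\R^3)$, which is precisely assertion~(3). Since $(u_n)_n$ is bounded in $H^1(\R^3)\hookrightarrow L^4\cap L^6$ and $f\mapsto K*f$ is bounded on $L^2$, the quantities $A(u_n),B(u_n),C(u_n)$ stay bounded, so $(\beta_n)_n$ is a bounded real sequence; along a further subsequence $\beta_n\to\beta\in\R$, which is assertion~(2).

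It remains to pass to the limit in (3) to obtain (4). I would pair the equation in (3) with an arbitrary $\varphi\in C_c^\infty(\R^3)$ and handle the resulting five terms separately. The term $\int\nabla u_n\cdot\nabla\bar\varphi$ converges by $u_n\rightharpoonup u$ in $H^1$, and $\beta_n\int u_n\bar\varphi$ converges since $\beta_n\to\beta$ and $u_n\rightharpoonup u$ in $L^2$. For the cubic term, Rellich-Kondrachov gives $u_n\to u$ in $L^4_{\mathrm{loc}}$, hence $|u_n|^2u_n\to|u|^2u$ in $L^{4/3}_{\mathrm{loc}}$ and $\int|u_n|^2u_n\bar\varphi\to\int|u|^2u\bar\varphi$; for the quintic term one combines the local a.e.\ convergence $u_n\to u$ with the bound $\sup_n\||u_n|^4u_n\|_{L^{6/5}}<\infty$ to get $|u_n|^4u_n\rightharpoonup|u|^4u$ weakly in $L^{6/5}(\R^3)$, whence $\int|u_n|^4u_n\bar\varphi\to\int|u|^4u\bar\varphi$ because $\bar\varphi\in L^6(\R^3)$. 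For the nonlocal term, write $\rho_n:=|u_n|^2$; it is bounded in $L^2\cap L^3$, and by Rellich $\rho_n\to|u|^2$ in $L^2_{\mathrm{loc}}$ while $\rho_n\rightharpoonup|u|^2$ weakly in $L^2$, so, $f\mapsto K*f$ being a bounded Calderon-Zygmund operator on $L^2$, $K*\rho_n\rightharpoonup K*|u|^2$ weakly in $L^2$; since also $u_n\bar\varphi\to u\bar\varphi$ strongly in $L^2$ (compact support plus Rellich), the weak-strong pairing gives $\int(K*\rho_n)u_n\bar\varphi\to\int(K*|u|^2)u\bar\varphi$. Letting $n\to\infty$ then shows that
\begin{align*}
-\Delta u+\beta u+\lambda_1|u|^2u+\lambda_2(K*|u|^2)u-|u|^4u=0
\end{align*}
holds in $\mathcal D'(\R^3)$, and since $u\in L^2$, $|u|^2u$ and $(K*|u|^2)u$ lie in $L^{4/3}\hookrightarrow H^{-1}$, and $|u|^4u\in L^{6/5}\hookrightarrow H^{-1}$, the identity in fact holds in $H^{-1}(\R^3)$; this is assertion~(4), the extension from $C_c^\infty$ to $H^1$ being by density and continuity.

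The only genuinely delicate point is the passage to the limit in the energy-critical quintic term and in the nonlocal term, since neither the embedding $H^1(\R^3)\hookrightarrow L^6(\R^3)$ nor the convolution $f\mapsto K*f$ is a compact operation. The way around this, essentially the content of \cite[Prop.~4.1]{Bellazzini2013}, is that one tests only against compactly supported functions, where Rellich-Kondrachov restores local strong convergence, together with the fact (recalled in the introduction) that $f\mapsto K*f$ maps each $L^p$, $1<p<\infty$, boundedly into itself. I would stress that this argument gives no information on whether $u\in S(c)$, since mass could a priori escape to infinity; it uses $\mathrm{dist}(u_n,V(c))=o(1)$ only to extract $H^1$-boundedness, and recovering the lost mass and upgrading $u_n\rightharpoonup u$ to strong convergence is the substantive work carried out later in the proof of Theorem~\ref{ground state}, not in this corollary.
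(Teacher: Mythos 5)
Your argument is correct and is essentially the standard proof that the paper itself only invokes by reference to \cite[Prop.~4.1]{Bellazzini2013}: the Lagrange multiplier $\beta_n=-\tfrac1c\langle E'(u_n),u_n\rangle$ for assertions (2)--(3), followed by testing against $C_c^\infty$ functions and using Rellich--Kondrachov, a.e.\ convergence of the critical term, and the $L^p$-boundedness of $f\mapsto K*f$ to pass to the limit in (4). Nothing in your write-up deviates from, or adds a gap to, what the paper intends.
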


\subsection{Nonvanishing weak limit}
We recall that from Corollary \ref{ps cor} we have found a function $u$ that will solve \eqref{pde}
for some $\beta\in \R$. However, it is \textit{a priori} unclear whether $u$ is non-vanishing. We
show that this is indeed the case when $\gamma(c)$ is strictly smaller than
$\frac{\SSS^{\frac32}}{3}$. The original proof of Soave \cite{SoaveCritical} relied on radial
symmetry arguments\footnote{In the original proof of Soave, the condition $\|u_n\|_4= o(1)$ was in
fact a consequence of the Strauss' compactness lemma for radial functions.} and is not applicable in
our case. We will remove this restriction using the $pqr$-lemma from \cite[Lem. 2.1]{FLL1986}.
\begin{lemma}\label{u not equal zero}
Let $c>0$. Suppose that the mountain pass level $\gamma(c)$ of the PS-sequence $(u_n)$ given by Lemma \ref{ps seq} satisfies
$$ \gamma(c)<\frac{\SSS^{\frac{3}{2}}}{3}\qquad\text{and}\qquad \gamma(c)\neq 0.$$
Then $(u_n)_n$ has a nonzero weak limit $u$ in $H^1(\R^3)$.
\end{lemma}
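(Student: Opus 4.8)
The plan is to argue by contradiction: suppose $u_n \rightharpoonup 0$ in $H^1(\R^3)$ and derive a violation of the bound $\gamma(c)<\SSS^{3/2}/3$. The starting point is that $(u_n)_n$ is a bounded PS-sequence on $V(c)$, so $Q(u_n)=o(1)$, i.e. $A(u_n)+\tfrac34 B(u_n)-C(u_n)=o(1)$, and also $E(u_n)=\gamma(c)+o(1)$. Recalling that $E(u)=\tfrac13 A(u)+\tfrac{1}{12}B(u)+\tfrac16\big(A(u)+\tfrac34 B(u)-C(u)\big)$ — or whichever exact linear combination makes the quintic term disappear — one gets $E(u_n)=\tfrac13 A(u_n)+\tfrac{1}{12}B(u_n)+o(1)$. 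The key step is to show that the cubic-type term $B(u_n)$ tends to zero along the sequence; granting this, $\gamma(c)=\lim \tfrac13 A(u_n)$ and $\lim\big(A(u_n)-C(u_n)\big)=0$, whence by the Sobolev inequality $A(u_n)\ge \SSS\, C(u_n)^{1/3}=\SSS\, (A(u_n)-o(1))^{1/3}$ forces $\liminf A(u_n)\ge \SSS^{3/2}$ (using $\gamma(c)\neq 0$ to rule out the trivial branch $A(u_n)\to 0$), and therefore $\gamma(c)\ge \SSS^{3/2}/3$, contradicting the hypothesis.

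It remains to prove $B(u_n)\to 0$ under the assumption $u_n\rightharpoonup 0$. Since $K$ is a Calder\'on–Zygmund kernel, $\|K*|u_n|^2\|_{L^2}\lesssim \|u_n\|_4^2$, so $\big|\lambda_2\int (K*|u_n|^2)|u_n|^2\big|\lesssim \|u_n\|_4^4$, and hence it suffices to show $\|u_n\|_4\to 0$. This is exactly where the $pqr$-lemma of Fröhlich–Lieb–Loss \cite[Lem. 2.1]{FLL1986} enters, replacing Soave's use of radial (Strauss) compactness: for a bounded sequence in $H^1$, if $\|u_n\|_2$ and $\|u_n\|_6$ stay bounded (which they do — mass is fixed and $C(u_n)=\|u_n\|_6^6$ is controlled via $Q(u_n)=o(1)$ together with the boundedness of $A$) and $u_n\rightharpoonup 0$, then $\|u_n\|_4\to 0$ \emph{provided} the sequence does not concentrate mass in the $L^4$ sense; more precisely the $pqr$-lemma gives, for bounded sequences, $\|u_n\|_4^4 \le \text{(const)}\, \|u_n\|_2^{\theta_1}\|u_n\|_6^{\theta_2}\,\big(\sup_{y\in\R^3}\int_{B_1(y)}|u_n|^2\big)^{\theta_3}$ with positive exponents. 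So one needs the vanishing of the concentration function $\sup_y\int_{B_1(y)}|u_n|^2\to 0$. If that holds we are done; if not, by the concentration–compactness dichotomy there is $y_n$ with $u_n(\cdot+y_n)\rightharpoonup w\neq 0$, and feeding $w$ into the PS-equation (Corollary \ref{ps cor}, suitably translated) would produce a genuine nonzero weak limit of the translated sequence, again contradicting the assumed weak convergence to zero once we note translations act trivially on all the quantities $A,B,C,E,Q$. Either way we reach $\|u_n\|_4\to 0$.

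The main obstacle is the middle step: making the $pqr$-lemma bite in the presence of the nonlocal term and the mass constraint. One has to be careful that the $L^6$-norm — not just the gradient — is a priori bounded along the sequence; this is not automatic since $E$ is unbounded below on $S(c)$, but it follows from combining $E(u_n)=\gamma(c)+o(1)$ with $Q(u_n)=o(1)$: these two relations plus the $\|\cdot\|_2$-bound and the estimate $|B(u_n)|\lesssim\|u_n\|_4^4\le \varepsilon\|u_n\|_6^6 + C_\varepsilon\|u_n\|_2^{\,*}$ (interpolation, absorbing the quintic term) pin down $A(u_n)$ and $C(u_n)=\|u_n\|_6^6$ as bounded. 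Once boundedness in $L^2\cap\dot H^1$ is in hand, the dichotomy argument above disposes of the concentration alternative, and the contradiction with $\gamma(c)<\SSS^{3/2}/3$ closes the proof.
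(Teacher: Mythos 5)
Your proposal is correct and follows essentially the same route as the paper, which in the case $\|u_n\|_4\to 0$ cites Soave's Lemma 3.3 (the Sobolev dichotomy you reconstruct; the exact identity is $E-\tfrac16 Q=\tfrac13 A+\tfrac18 B$, not $\tfrac1{12}B$) and in the non-vanishing case invokes the $pqr$-lemma together with Lieb's translation lemma, exactly as you do. The only cosmetic point is that in the non-vanishing branch you do not actually contradict $u_n\rightharpoonup 0$: you establish the lemma's conclusion directly for the translated sequence, which, by translation invariance of $A$, $B$, $C$, $E$ and $Q$, is how the statement is meant to be read (and how the paper reads it).
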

\begin{proof}
By Corollary \ref{ps cor} we know that $(u_n)_n$ is a bounded sequence in $H^1(\R^3)$, hence also in
$L^p(\R^3)$ for all $p\in[2,6]$. We can then distinguish between two cases:
\begin{itemize}
\item[(i)]$\|u_n\|_4= o(1)$. Then the claim follows from \cite[Lem. 3.3]{SoaveCritical}.

\item[(ii)]$\|u_n\|_4\neq o(1)$. Then the claim follows from the $pqr$-lemma (\cite[Lem.
2.1]{FLL1986}) and the Lieb-translation (\cite[Lem. 6]{Lieb1983}).
\end{itemize}
\end{proof}

\subsection{A qualitative description of the mapping $c\mapsto\gamma(c)$}
We first show that the number $\gamma(c)$ is always positive and will never exceed
$\frac{\SSS^{\frac{3}{2}}}{3}$.
\begin{lemma}\label{range of gamma c}
For all $c>0$ we have $\gamma(c)\in(0,\frac{\SSS^{\frac{3}{2}}}{3}]$.
\end{lemma}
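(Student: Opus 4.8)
The plan is to prove the two claimed bounds separately, starting with the upper bound $\gamma(c)\le \SSS^{3/2}/3$, which is the more delicate one. First I would recall that $\SSS^{3/2}/3$ is precisely the mountain-pass level of the pure energy-critical problem $-\Delta u = |u|^4 u$ and is attained (in the limit) by (rescaled, translated) Aubin--Talenti bubbles $W_\varepsilon$. The idea is to use these bubbles, cut off and $L^2$-normalized, as test functions: fix a standard Aubin--Talenti instanton $W$, truncate it with a smooth cutoff on a ball of radius $R$, rescale to put its mass equal to $c$, and call the result $v_\varepsilon$. Classical estimates (as in Brezis--Nirenberg) give $\|\nabla v_\varepsilon\|_2^2 = \SSS^{3/2} + O(\varepsilon)$ and $\|v_\varepsilon\|_6^6 = \SSS^{3/2} + O(\varepsilon)$, while the subcritical terms $\|v_\varepsilon\|_4^4$ and the dipolar term $\int (K*|v_\varepsilon|^2)|v_\varepsilon|^2$ are lower order — recall $K*$ is a Calderón--Zygmund operator, bounded on every $L^p$, $1<p<\infty$, so that dipolar term is $O(\|v_\varepsilon\|_4^4)=o(1)$ in the relevant scaling. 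Then I would project onto $V(c)$ using Lemma \ref{monotoneproperty}(ii): there is $t_\varepsilon>0$ with $v_\varepsilon^{t_\varepsilon}\in V(c)$, and by Lemma \ref{monotoneproperty}(v) and (i), $E(v_\varepsilon^{t_\varepsilon})=\max_{t>0}E(v_\varepsilon^t)$. A direct computation of $\max_{t>0}\big(\tfrac{t^2}{2}A(v_\varepsilon) + \tfrac{t^3}{4}B(v_\varepsilon) - \tfrac{t^6}{6}C(v_\varepsilon)\big)$, expanding in $\varepsilon$ and using $A(v_\varepsilon)^{3}/C(v_\varepsilon)\to \SSS^{3/2}$ together with $B(v_\varepsilon)=o(1)$, shows the max is $\SSS^{3/2}/3 + o(1)$; passing to the limit gives $\gamma(c)\le \SSS^{3/2}/3$.

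For the lower bound $\gamma(c)>0$, I would argue directly on $V(c)$. Take any $u\in V(c)$, so $Q(u)=A(u)+\tfrac34 B(u) - C(u) = 0$. Using the Sobolev inequality $C(u)=\|u\|_6^6 \le \SSS^{-3}A(u)^3$ together with the bound $|B(u)|\le C'\|u\|_4^4 \le C''\,c^{1/2}A(u)^{3/2}$ (Gagliardo--Nirenberg plus Calderón--Zygmund boundedness of $K*$), the constraint $A(u)=C(u)-\tfrac34 B(u)$ forces $A(u)$ to be bounded below by a positive constant depending only on $c$ (if $A(u)$ were small, the right-hand side would be of strictly higher order and could not equal $A(u)$). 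Then, rewriting the energy on $V(c)$ via $C(u)=A(u)+\tfrac34 B(u)$, one gets $E(u)=\tfrac13 A(u) + \tfrac{1}{8}B(u) + (\text{remaining multiple of }B(u))$, which after collecting terms is of the form $\tfrac13 A(u) + (\text{const})B(u)$; since $|B(u)|\lesssim c^{1/2}A(u)^{3/2}$ and $A(u)$ is bounded below but can be controlled from above on a minimizing sequence, a short estimate gives $E(u)\ge \tfrac16 A(u) \ge \delta(c)>0$ for all $u\in V(c)$, hence $\gamma(c)>0$.

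The main obstacle is the upper-bound half: one must verify that the dipolar term genuinely stays lower order in the bubble scaling despite the anisotropy of $K$. The point is that the scaling $u\mapsto u^t$ and the concentration scaling of $W_\varepsilon$ are both isotropic dilations, under which the Calderón--Zygmund operator $K*$ is invariant up to the same power as $\|\cdot\|_4^4$; combined with $\widehat K\in L^\infty$ this gives $|\int(K*|v_\varepsilon|^2)|v_\varepsilon|^2|\le \|\widehat K\|_\infty\|\,\widehat{|v_\varepsilon|^2}\,\|_2^2 = c_0\|v_\varepsilon\|_4^4 = o(1)$ in the rescaled variables, so the anisotropy causes no trouble at this level. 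I should double-check the exact order of the cutoff errors $O(\varepsilon)$ versus the subcritical contribution $O(\varepsilon^{1/2})$ or $O(\varepsilon^{1/2}|\log\varepsilon|)$ to be sure the dominant balance is the pure-critical one; this is the only place where care with the Brezis--Nirenberg-type asymptotics is needed, and it is routine once set up.
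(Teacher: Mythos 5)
Your strategy coincides with the paper's in all essentials: truncated Aubin--Talenti bubbles, $L^2$-normalised to mass $c$ and projected onto $V(c)$ via Lemma \ref{monotoneproperty}, for the upper bound; and, for positivity, a Gagliardo--Nirenberg/Sobolev lower bound on $\inf_{V(c)}\|\nabla u\|_2$ combined with an identity expressing $E$ through $A$ on $V(c)$. Two points deserve comment. First, the paper avoids estimating the dipolar term on the test functions altogether by taking the bubbles \emph{radial}: since $\widehat K$ has zero spherical average, $\int(K*|u|^2)|u|^2\,dx=0$ for every radial $u$, so $E=\tilde E$ and $Q=\tilde Q$ exactly on this family and Soave's expansion \cite[Lem.~6.4]{SoaveCritical} can be quoted verbatim; your route keeps the dipolar term and dominates it by $\|\widehat K\|_\infty\|v_\varepsilon\|_4^4$ via Plancherel, which is also valid but obliges you to redo the expansion. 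Second, within that expansion there is a bookkeeping slip: for the mass-$c$ normalised concentrating bubble one has $A(v_\varepsilon)\sim\varepsilon^{-1}$, $\|v_\varepsilon\|_4^4\sim\varepsilon^{-1}$ and $C(v_\varepsilon)\sim\varepsilon^{-3}$, so the claim $B(v_\varepsilon)=o(1)$ is false as stated. What is true, and what your argument actually needs, is that the scale-invariant ratio $B/(A^{3/4}C^{1/4})$ is $O(\varepsilon^{1/2})$, equivalently that $B(v_\varepsilon^{t_\varepsilon})=O(\varepsilon^{1/2})$ after the projection $t_\varepsilon\sim\varepsilon^{1/2}$; this is precisely the $C_1\varepsilon^{1/2}$ error appearing in \eqref{threshold}, and with this correction the dominant balance is the pure critical one and the bound $\gamma(c)\le\SSS^{3/2}/3$ follows. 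For the lower bound, your detour through $\tfrac13A+\tfrac18B$ and a minimizing sequence is unnecessary and, as phrased (``for all $u\in V(c)$'' while invoking control along a minimizing sequence), internally inconsistent: the identity $E(u)=E(u)-\tfrac13Q(u)=\tfrac16\big(A(u)+C(u)\big)\ge\tfrac16A(u)$ holds on all of $V(c)$ and requires no control on $B$ whatsoever, which is exactly the paper's argument.
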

\begin{proof}
Let $u\in V(c)$. Using the Gagliardo-Nirenberg and Sobolev inequalities we obtain that
\begin{align*}
\|\nabla u\|_2^2=A(u)=-\frac{3}{4}B(u)+C(u)\leq C(c^{\frac{1}{2}}\|\nabla u\|^3_2+\|\nabla u\|^6_2).
\end{align*}
Since $c\neq 0$ we can divide by $\|\nabla u\|_2^2$ and modify the constants to obtain that
$$ \inf_{u\in V(c)}\|\nabla u\|_2>0.$$
Since
$$E(u)=E(u)-\frac{1}{3}Q(u)=\frac{1}{6}(A(u)+C(u))\geq \frac{1}{6}A(u)=\frac{1}{6}\|\nabla u\|_2^2$$
for $u\in V(c)$, the lower bound follows by taking infimum over $V(c)$ on both sides. It is left to show the upper bound. We define
\begin{align*}
\tilde{E}(u)&:=\frac{1}{2}\|\nabla u\|_2^2+\frac{1}{4}\lambda_1\|u\|_4^4-\frac{1}{6}\|u\|_6^6,\\
\tilde{Q}(u)&:=\|\nabla u\|_2^2+\frac{3}{4}\lambda_1\|u\|_4^4-\|u\|_6^6.
\end{align*}
Then $\tilde{E}(u)$ and $\tilde{Q}(u)$ are the energy and virial corresponding to the equation
\begin{align}\label{no dipolar}
-\Delta u+\beta u+\lambda_1 |u|^2u-|u|^4u=0
\end{align}
respectively. For a radially symmetric $u$ we have
\begin{align*}
\tilde{E}(u)&:=E(u),\\
\tilde{Q}(u)&:=Q(u).
\end{align*}
Now we define
\begin{align}
u_{\varepsilon}(x) &:=\varphi(x)\cdot(\frac{\varepsilon}{\varepsilon^2+|x|^2})^{\frac{1}{2}},
\label{eq 1}\\
v_{\varepsilon} &:=c^{\frac{1}{2}}\frac{u_{\varepsilon}}{\|u_{|\varepsilon}\|_2}\label{eq_2},
\end{align}
where $\varphi\in C^\infty_c(\R^3)$ is a real radial cut-off function with $c\equiv 1$ in $B_1$,
$\phi\equiv 0$ in $B_2^c$ and $\phi$ is radially decreasing. Denote by $\tilde{t}^*$ the number
introduced in Lemma \ref{monotoneproperty}, but corresponding to \eqref{no dipolar}. Due to
\cite[Lem. 6.4]{SoaveCritical} there exist $C_1,C_2>0$ such that
\begin{align}\label{threshold}
\tilde{E}(v_{\varepsilon}^{\tilde{t}^*})\leq \frac{\SSS^{\frac{3}{2}}}{3}+C_1\varepsilon^{\frac{1}{2}}+C_2\varepsilon=\frac{\SSS^{\frac{3}{2}}}{3}
+O(\vare^{\frac12})
\end{align}
for all sufficiently small $\varepsilon$. Since $v_{\varepsilon}^{\tilde{t}^*}$ is radially
symmetric, using \eqref{eq 1} and \eqref{eq_2} we obtain that $v_{\varepsilon}^{\tilde{t}^*}\in
V(c)$ and $E(v_{\varepsilon}^{\tilde{t}^*})=\tilde{E}(v_{\varepsilon}^{\tilde{t}^*})$. The upper
bound follows by the definition of $\gamma(c)$.
\end{proof}
\begin{lemma}\label{infV(c)}
The curve $c\mapsto\gamma(c)$ is non-increasing and continuous on $(0,\infty)$.
\end{lemma}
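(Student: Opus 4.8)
The plan is to prove the two assertions---monotonicity and continuity---separately, both by exploiting the scaling structure encoded in Lemma \ref{monotoneproperty} together with the characterization $\gamma(c) = \inf_{u \in S(c)} E(u^{t^*(u)})$ established in the proof of Lemma \ref{ps seq}. For \emph{monotonicity}, fix $0 < c_1 < c_2$ and take any $u \in S(c_1)$; I want to produce a competitor in $S(c_2)$ whose energy (after projecting onto $V(c_2)$ via the scaling $t \mapsto v^t$) is no larger than $E(u^{t^*(u)})$, up to an error that vanishes as $c_2 \to c_1$. The natural candidate is $v = \sqrt{c_2/c_1}\, u$, but pure rescaling of the amplitude changes all the nonlinear terms unfavourably; the cleaner approach is the standard concentration-at-infinity trick: for $u \in V(c_1)$ and $w \in C_c^\infty$ with small mass supported far away, set $u_R = u + w(\cdot - R e_1)$, so $\|u_R\|_2^2 \to c_1 + \|w\|_2^2$, and as $R \to \infty$ the cross terms in $A$, $B$ (using that $K*|\cdot|^2$ has decay and $K$ is Calderón--Zygmund) and $C$ all vanish, giving $E(u_R) \to E(u) + E_\infty(w)$ and $Q(u_R) \to Q(u) + Q_\infty(w)$. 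Choosing $w$ with arbitrarily small mass and small energy/virial contribution, then applying the scaling $t^*$ to land on $V(c_2)$ with $c_2 = c_1 + \|w\|_2^2$, and using part (v) of Lemma \ref{monotoneproperty} to control $E((u_R)^{t^*})$, yields $\gamma(c_2) \le \gamma(c_1) + o(1)$; since $c_2 > c_1$ was arbitrary and $\gamma(c_1)$ is approached along $V(c_1)$, this gives $\gamma(c_2) \le \gamma(c_1)$.

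For \emph{continuity}, I would argue upper and lower semicontinuity at a fixed $c_0 > 0$ along any sequence $c_n \to c_0$. Upper semicontinuity: given $\varepsilon > 0$ pick $u \in V(c_0)$ with $E(u) \le \gamma(c_0) + \varepsilon$, set $u_n = \sqrt{c_n/c_0}\, u \in S(c_n)$, and use Lemma \ref{monotoneproperty}(ii) to get $t_n^* > 0$ with $(u_n)^{t_n^*} \in V(c_n)$; since $u_n \to u$ strongly in $H^1$, continuity of $A,B,C$ forces $t_n^* \to t^*(u) = 1$ (as $Q(u)=0$), whence $E((u_n)^{t_n^*}) \to E(u^1) = E(u)$ and therefore $\limsup_n \gamma(c_n) \le \gamma(c_0) + \varepsilon$. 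Lower semicontinuity: take $u_n \in V(c_n)$ with $E(u_n) \le \gamma(c_n) + 1/n$; by the coercivity estimate from the proof of Lemma \ref{range of gamma c} (namely $E(u_n) = \tfrac16(A(u_n)+C(u_n)) \ge \tfrac16 A(u_n)$ and $\inf_{V(c_n)}\|\nabla u\|_2 > 0$ uniformly for $c_n$ in a compact set) the sequence is bounded in $H^1$, so rescaling $\tilde u_n = \sqrt{c_0/c_n}\, u_n \in S(c_0)$ differs from $u_n$ by $o(1)$ in $H^1$; then $Q(\tilde u_n) = Q(u_n) + o(1) = o(1)$, so by Lemma \ref{monotoneproperty}(iii)--(v) the projection parameter $t^*(\tilde u_n) \to 1$ and $\gamma(c_0) \le E((\tilde u_n)^{t^*}) = E(\tilde u_n) + o(1) = E(u_n) + o(1) \le \gamma(c_n) + o(1)$, giving $\gamma(c_0) \le \liminf_n \gamma(c_n)$.

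The main obstacle is the bookkeeping in the monotonicity step: one must verify that the dipolar cross term $\int_{\R^3} (K * |u_R|^2)|u_R|^2\,dx$ really does decouple as $R \to \infty$ into $\int (K*|u|^2)|u|^2 + \int(K*|w(\cdot - Re_1)|^2)|w(\cdot - Re_1)|^2$ plus vanishing mixed terms. This is where the Calderón--Zygmund / $L^p$-boundedness of the convolution with $K$, combined with the spatial separation of the supports and Hölder's inequality, is essential; the pointwise decay $|K(x)| \lesssim |x|^{-3}$ is not integrable, so the argument must be phrased via the operator bound $\|K * f\|_{L^{3/2}} \lesssim \|f\|_{L^{3/2}}$ applied to compactly supported pieces rather than by naive pointwise estimates. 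A secondary (routine but necessary) point is the uniform lower bound $\inf_{u \in V(c)} \|\nabla u\|_2 > 0$ for $c$ near $c_0$, which follows by inspecting the constants in the Gagliardo--Nirenberg/Sobolev estimate of Lemma \ref{range of gamma c} and noting they depend continuously on $c$.
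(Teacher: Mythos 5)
The paper does not actually prove this lemma: it declares the proof standard and points to \cite{Bellazzini2013}. Your proposal reconstructs exactly that standard argument (far-away bump plus projection onto $V(c)$ via the scaling \eqref{def l2 scaling} for monotonicity; rescale-and-reproject for the two semicontinuity inequalities), so in spirit you and the paper agree. Two steps in your write-up, however, are not yet proofs as stated.

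First, in the monotonicity step you take $w$ ``with arbitrarily small mass'' and set $c_2=c_1+\|w\|_2^2$. As written this only compares $\gamma(c_1)$ with $\gamma(c_2)$ for $c_2$ in a shrinking neighbourhood of $c_1$, and ``$\gamma(c')\le\gamma(c)$ for $c'$ slightly larger than $c$'' does not by itself yield global monotonicity. The correct choice is to \emph{fix} $\|w\|_2^2=c_2-c_1$ (possibly large) and shrink not the mass but $A(w)$, $B(w)$, $C(w)$: the scaling $w^t$ preserves $\|w\|_2$ while $A(w^t)=t^2A(w)$, $\|w^t\|_4^4=t^3\|w\|_4^4$ and $C(w^t)=t^6C(w)$ all tend to $0$ as $t\to0$, so $\sup_{s\in[1/2,2]}|E(w^s)|$ can be made arbitrarily small at fixed mass; then $t^*(u_R)\to t^*(u)=1$ and Lemma \ref{monotoneproperty}(v) gives $\gamma(c_2)\le E(u)+\varepsilon$ for \emph{every} $c_2>c_1$. (Incidentally, the Calder\'on--Zygmund bound is not needed for the cross terms: after approximating $u$ by a compactly supported function, the supports of $u$ and $w(\cdot-Re_1)$ are disjoint for large $R$, so all products $u\overline{w_R}$ vanish identically and the one remaining mixed dipolar term is controlled pointwise by $\sup_{|x|\ge R/2}|K(x)|\lesssim R^{-3}$ times $\|u\|_2^2\|w\|_2^2$; the non-integrability of $|x|^{-3}$ is irrelevant because the kernel is only evaluated at separated points.)

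Second, in the lower-semicontinuity step the assertion $t^*(\tilde u_n)\to1$ is not a formal consequence of Lemma \ref{monotoneproperty}(iii)--(v): knowing $Q(\tilde u_n)=o(1)$ together with the sign pattern of $s\mapsto Q(\tilde u_n^s)$ does not exclude the root drifting away from $1$ while the function stays small near $s=1$. What closes this is a non-degeneracy you already half-identified. On $V(c_n)$ one has $C(u_n)=A(u_n)+\tfrac34B(u_n)$, hence with $y_n(s):=sA(u_n)+\tfrac34s^2B(u_n)-s^5C(u_n)$ one computes $y_n(s)=A(u_n)s(1-s^4)+\tfrac34B(u_n)s^2(1-s^3)\le -A(u_n)s(s-1)$ for $s>1$ (using $\tfrac34B\ge-A$, which follows from $C\ge0$), and $y_n(s)\ge A(u_n)s(1-s)$ for $s<1$. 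Combined with the uniform bound $\inf_{V(c)}A>0$ for $c$ near $c_0$, this shows $y_n$ is uniformly bounded away from $0$ outside any neighbourhood of $s=1$, so the root of the $o(1)$-perturbed polynomial is $1+o(1)$, and then $E(\tilde u_n^{t^*})-E(\tilde u_n)=\int_1^{t^*}Q(\tilde u_n^s)s^{-1}\,ds=o(1)$ by boundedness of the integrand on $[1/2,2]$. With these two repairs your argument is complete and is, as far as one can tell, the proof the paper has in mind.
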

\begin{proof}
The proof is standard, see for instance \cite{Bellazzini2013}.
\end{proof}

The following lemma is fundamental for the proof of Theorem \ref{ground state} (ii).
\begin{lemma}\label{vanishing of gamma c}
Let $(\ld_1,\ld_2)$ be unstable. Then $\gamma(c)\to 0$ as $c\to\infty$.
\end{lemma}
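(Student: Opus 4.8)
The plan is to exhibit, for each large $c$, an explicit competitor $u\in S(c)$ with $Q(u)=0$ and $E(u)$ as small as we like. Since by Lemma \ref{monotoneproperty} every $u\in S(c)$ can be rescaled to land on $V(c)$ via $u\mapsto u^{t^*}$, and since $E(u^{t^*})=\max_{t>0}E(u^t)$, it is cleaner to work as follows: fix \emph{one} profile $w\in H^1(\R^3)$ adapted to the unstable regime so that $B(w)<0$ (this is where the hypothesis ``$(\ld_1,\ld_2)$ unstable'' enters — one must pick $w$ whose Fourier support, or shape, makes $\ld_1\|w\|_4^4+\ld_2\int(K*|w|^2)|w|^2<0$; in the sub-case $\ld_2>0$, $\widehat K$ attains $-\tfrac{4\pi}{3}$ on the $\xi_3$-axis, while in the sub-case $\ld_2<0$, $\widehat K$ attains $\tfrac{8\pi}{3}$ on the $\{\xi_3=0\}$-plane, and the two inequalities defining the unstable regime are precisely $\ld_1-\tfrac{4\pi}{3}\ld_2<0$ and $\ld_1+\tfrac{8\pi}{3}\ld_2<0$). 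Then consider the mass-rescaled family $w_c:=\sqrt{c/\|w\|_2^2}\,w$, so $w_c\in S(c)$, and let $t^*_c>0$ be such that $(w_c)^{t^*_c}\in V(c)$.

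Next I would track the scaling. Writing $a:=A(w)$, $b:=B(w)$ ($<0$), $\kappa:=C(w)$, and absorbing the mass factor, one has $A(w_c)=\tfrac{c}{\|w\|_2^2}a$, $B(w_c)=(\tfrac{c}{\|w\|_2^2})^2 b$, $C(w_c)=(\tfrac{c}{\|w\|_2^2})^3\kappa$, all with a fixed sign pattern as $c\to\infty$. By Lemma \ref{monotoneproperty}(i) the equation for $t^*_c$ is $y(t)=tA(w_c)+\tfrac34 t^2 B(w_c)-t^5 C(w_c)=0$, i.e. $A(w_c)+\tfrac34 t B(w_c)-t^4 C(w_c)=0$; since $B(w_c)<0$ this forces $t^*_c$ to be \emph{small} for large $c$ — more precisely, balancing the first two terms (the quintic term is then negligible) gives $t^*_c\sim -\tfrac{4A(w_c)}{3B(w_c)} = -\tfrac{4a\|w\|_2^2}{3bc}\to 0$. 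Plugging back, $E((w_c)^{t^*_c})=E(u)-\tfrac13 Q(u)\big|_{u=(w_c)^{t^*_c}}=\tfrac16\big(A((w_c)^{t^*_c})+C((w_c)^{t^*_c})\big)=\tfrac16\big((t^*_c)^2 A(w_c)+(t^*_c)^6 C(w_c)\big)$, and since $(t^*_c)^2 A(w_c)\sim \tfrac{16a^3\|w\|_2^2}{9b^2 c}\to 0$ while $(t^*_c)^6 C(w_c)\to 0$ even faster, we conclude $\gamma(c)\le E((w_c)^{t^*_c})\to 0$. Combined with the lower bound $\gamma(c)>0$ from Lemma \ref{range of gamma c}, this gives $\gamma(c)\to 0$.

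The one genuine point to verify carefully is the asymptotics $t^*_c\to 0$ together with $(t^*_c)^2 A(w_c)\to 0$: since $A(w_c)$ itself blows up linearly in $c$, one must check that $t^*_c$ decays fast enough. This is not automatic from $B(w_c)<0$ alone in a soft way — but it follows quantitatively from the fixed-profile scaling above, because the cubic term $\tfrac34 t B(w_c)$ scales like $c^2 t$ whereas the gradient term scales like $c\,t$, so their balance sits at $t\sim 1/c$, making $t^2 A(w_c)\sim c\cdot c^{-2}=c^{-1}$. I expect this elementary but slightly delicate bookkeeping — ensuring one is on the correct branch of the quartic $A(w_c)+\tfrac34 t B(w_c)-t^4C(w_c)=0$ and that the quintic correction is lower order throughout — to be the main (modest) obstacle; the existence of a profile $w$ with $B(w)<0$ in the unstable regime is routine given the stated range of $\widehat K$.
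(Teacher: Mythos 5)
Your argument is correct, and it rests on the same underlying mechanism as the paper's proof: produce, for large mass, elements of $V(c)$ on which the negative cubic term $B$ dominates the gradient term, so that the projection parameter onto $V(c)$ tends to $0$ and $E=\tfrac16(A+C)\to 0$ there. The trial family is genuinely different, however, and yours is simpler. The paper takes a minimizing sequence for the Gagliardo--Nirenberg-type quotient $\|u\|_2\|\nabla u\|_2^3/(-B(u))$ over $\{B<0\}$, normalised so that $\|u_n\|_2=n$ and $-B(u_n)=1$; this only reaches the masses $c=n^2$, which is why the paper must then invoke the monotonicity of $c\mapsto\gamma(c)$ from Lemma \ref{infV(c)}. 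You instead amplitude-scale a single fixed profile $w$ with $B(w)<0$ to \emph{every} mass $c$, which dispenses with both the quotient and the appeal to monotonicity. Two remarks. First, the ``delicate bookkeeping'' you flag as the main obstacle is in fact soft: since $t\mapsto A(w_c)+\tfrac34 tB(w_c)-t^4C(w_c)$ is strictly decreasing when $B(w_c)<0$, dropping the nonnegative term $t^4C(w_c)$ gives the one-line bound $t^*_c\le \tfrac{4A(w_c)}{3|B(w_c)|}=O(c^{-1})$, and because $(t^*_c)^2A(w_c)$ and $(t^*_c)^6C(w_c)$ are increasing in $t^*_c$, this upper bound alone already yields $E\big((w_c)^{t^*_c}\big)=O(c^{-1})$ --- no branch analysis or two-sided asymptotics are needed. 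Second, your parenthetical on the geometry of $\widehat K$ is swapped: $\widehat K$ equals $\tfrac{8\pi}{3}$ on the $\xi_3$-axis and $-\tfrac{4\pi}{3}$ on the plane $\{\xi_3=0\}$, so for $\ld_2>0$ one concentrates $\widehat{|u|^2}$ near $\{\xi_3=0\}$ and for $\ld_2<0$ near the $\xi_3$-axis. This slip does not affect the proof, since all you need is the existence of some $w$ with $B(w)<0$ in the unstable regime, which the paper simply cites from \cite[Remark 4.1]{BellazziniJeanjean2016}.
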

\begin{proof}
By \cite[Remark 4.1]{BellazziniJeanjean2016} we have that the set
$$ \{u\in H^1(\R^3):B(u)< 0\}$$
is not empty. Assuming $B(u)<0$ and using the Fourier transform and the Gagliardo-Nierenberg
inequality, we estimate
\[
 -B(u)\leq \frac{\|u\|_4^4}{(2\pi)^3}\,
\max\left\{\left|\lambda_1-\lambda_2\frac{4\pi}{3}\right|,\left|\lambda_1+\lambda_2\frac{8\pi}{3}
\right|\right\}\leq \frac{\|u\|_2\, \|\nabla u\|_2^3}{(2\pi)^3}\,
\max\left\{\left|\lambda_1-\lambda_2\frac{4\pi}{3}\right|,\left|\lambda_1+\lambda_2\frac{8\pi}{3}
\right|\right\},
\]
so that
\begin{align}\label{gagliardo nirenberg nonlocal}
\mathrm{C}_{\rm GN}:=\inf_{u\in H^1(\R^3)\setminus\{0\},B(u)<0}\frac{\|u\|_2\|\nabla
u\|_2^3}{-B(u)} >0.
\end{align}
Let $(u_n)_n$ be a minimizing sequence for
 \eqref{gagliardo nirenberg nonlocal}. The rescaling $u(x)\to q\,u(s\,x)$ leaves the minimized
quantity in \eqref{gagliardo nirenberg nonlocal} invariant, so we may assume that
\begin{align}
\|u_n\|_2=n,\quad -B(u_n)=1,
\end{align}
and, therefore,
\begin{align}
\|\nabla u_n\|_2=n^{-\frac{1}{3}}(\mathrm{C}_{\rm GN}+o(1))^{-\frac{1}{3}}.
\end{align}
Using Lemma \ref{monotoneproperty} we find $\theta_n\in(0,\infty)$ such that
$Q(u_n^{\theta_n})=0$. This is equivalent to
\begin{align}
0&=\theta^2_n\|\nabla u_n\|^2_2+\frac{3}{4}\theta_n^3B(u_n)-\theta_n^6\|u_n\|_6^6\nonumber\\
&=\theta^2_n n^{-\frac{2}{3}}(\mathrm{C}_{\rm GN}+o(1))^{-\frac{2}{3}}-\frac34
\theta_n^3-\theta_n^6\|u_n\|_6^6,
\end{align}
which implies
\begin{align}\label{crutial eq}
\theta_n^4\|u_n\|_6^6= n^{-\frac{2}{3}}(\mathrm{C}_{\rm GN}+o(1))^{-\frac{2}{3}}-\frac34\theta_n.
\end{align}
If either $\liminf\limits_{n\to\infty}\theta_n>0$ or $\limsup\limits_{n\to\infty}\theta_n>0$, then
for sufficiently large $n$, the right hand side of \eqref{crutial eq} will be negative, while the
left hand side is always positive, which is a contradiction. Thus $\lim\limits_{n\to
\infty}\theta_n=0$. Consequently,
\begin{align}
\gamma_{n^2}&\leq E(u_n^{\theta_n})\nonumber\\
&=\frac{1}{2}\theta^2_n n^{-\frac{2}{3}}(\mathrm{C}_{\rm GN}+o(1))^{-\frac{2}{3}}-\frac14
\theta_n^3-\frac{1}{6}\theta_n^6\|u_n\|_6^6\nonumber\\
&=\frac{1}{2}\theta^2_n n^{-\frac{2}{3}}(\mathrm{C}_{\rm GN}+o(1))^{-\frac{2}{3}}-\frac14
\theta_n^3
-\frac{1}{6}\theta_n^2(n^{-\frac{2}{3}}(\mathrm{C}_{\rm
GN}+o(1))^{-\frac{2}{3}}-\frac34\theta_n)=o(1).
\end{align}
Combining that with the non-increasing monotonicity of $c\mapsto \gamma(c)$, proved in Lemma
\ref{infV(c)}, completes the proof.
\end{proof}

\section{Proof of Theorem \ref{ground state}}\label{sec:proof main theorem}
Having all the preliminaries we are in the position to prove Theorem \ref{ground state}. We will
firstly show the existence of ground states as long as $\gamma(c)$ is strictly smaller than
$\frac{\SSS^{\frac32}}{3}$ (Lemma \ref{thm smaller than sss}). Afterwards, we show that this
assumption is guaranteed by the conditions given in Theorem \ref{ground state}. It is worth noting
that the proof of Lemma \ref{thm smaller than sss} provides with an alternative for the ones from
\cite{SoaveCritical} without any use of radial symmetry arguments, which may be of independent
interest.

\begin{lemma}\label{thm smaller than sss}
Let $(\ld_1,\ld_2)$ be unstable and let $c\in(0,\infty)$. If
$\gamma(c)\in\big(0,\frac{\SSS^{\frac32}}{3}\big)$, then \eqref{variational gamma c} has at least
an optimizer $u_c\in V_c$.
\end{lemma}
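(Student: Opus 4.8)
\emph{Plan of proof.} The strategy is a direct concentration--compactness analysis of the bounded Palais--Smale sequence with vanishing virial, excluding vanishing and dichotomy by hand. First I would apply Lemma~\ref{ps seq} to obtain a PS-sequence $(u_n)_n\subset S(c)$ with $E(u_n)\to\gamma(c)$, $E'(u_n)\to0$ and $\mathrm{dist}(u_n,V(c))\to0$, hence also $Q(u_n)\to0$. By Corollary~\ref{ps cor}, up to a subsequence (and, extracting once more, a.e.\ convergence) we have $u_n\rightharpoonup u$ in $H^1(\R^3)$, $\beta_n\to\beta$ in $\R$, and $u$ solves \eqref{pde} with this $\beta$; by Lemma~\ref{lemma pohozaev}, $Q(u)=0$ and $4\beta\|u\|_2^2=-B(u)$. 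Since the hypothesis gives $\gamma(c)\in(0,\SSS^{3/2}/3)$, Lemma~\ref{u not equal zero} yields $u\not\equiv0$. Set $\tilde c:=\|u\|_2^2\in(0,c]$ and $v_n:=u_n-u$, so $v_n\rightharpoonup0$ and $\|v_n\|_2^2\to d:=c-\tilde c\ge0$. The goal is then to prove $d=0$ and $v_n\to0$ in $H^1$.

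The first block is a Brezis--Lieb decomposition along $u_n=u+v_n$. The gradient and the $L^4$, $L^6$ functionals split in the standard way; the one term requiring care is the nonlocal one. Writing $|u_n|^2=|u|^2+|v_n|^2+e_n$ with $e_n:=2\Re(\bar u v_n)$, one checks $e_n\to0$ in $L^2(\R^3)$ (split $\R^3$ into a large ball, where Rellich applies since $v_n$ is $H^1$-bounded, and its complement, where $\|u\|_{L^6}$ is small); combining this with the boundedness of the Calder\'on--Zygmund operator $K*(\cdot)$ on $L^2$, $L^{3/2}$, $L^3$ and with $|v_n|^2\rightharpoonup0$ in $L^3$, every cross term in $\int(K*|u_n|^2)|u_n|^2$ vanishes, so $B(u_n)=B(u)+B(v_n)+o(1)$, hence $E(u_n)=E(u)+E(v_n)+o(1)$ and $Q(u_n)=Q(u)+Q(v_n)+o(1)$. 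Since $Q(u_n)\to0$ and $Q(u)=0$, we get $Q(v_n)\to0$, so by the identity $E(w)-\tfrac13Q(w)=\tfrac16(A(w)+C(w))$ used in Lemma~\ref{range of gamma c} we have $E(v_n)=\tfrac16(A(v_n)+C(v_n))+o(1)\ge o(1)$. On the other hand $u\in V(\tilde c)$, so by Lemma~\ref{infV(c)} (monotonicity) $E(u)\ge\gamma(\tilde c)\ge\gamma(c)$. Passing to the limit in $\gamma(c)=E(u)+E(v_n)+o(1)$ forces $E(u)=\gamma(c)$ and $A(v_n)+C(v_n)\to0$; thus $A(v_n)\to0$, $C(v_n)\to0$, and then $B(v_n)\to0$ from $Q(v_n)\to0$.

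Next I would exclude the mass leak. Because $E(u)=\gamma(c)<\SSS^{3/2}/3$ and $Q(u)=0$, the Sobolev inequality forces $B(u)<0$: otherwise $Q(u)=0$ gives $C(u)\ge A(u)\ge\SSS^{3/2}$, hence $E(u)=\tfrac16(A(u)+C(u))\ge\SSS^{3/2}/3$, a contradiction. Therefore, by \eqref{beta bu}, $\beta=\beta_c=-B(u)/(4\tilde c)>0$. Now subtract the equation solved by $u$ from the one solved to $o(1)$ in $H^{-1}$ by $u_n$ and test the difference against $v_n$: the cubic and quintic differences are treated by the Brezis--Lieb lemma for the nonlinearities ($|u_n|^2u_n-|u|^2u-|v_n|^2v_n\to0$ in $L^{4/3}$, $|u_n|^4u_n-|u|^4u-|v_n|^4v_n\to0$ in $L^{6/5}$) so that they contribute $B(v_n)-C(v_n)+o(1)$ after using the nonlocal bookkeeping above once more; the linear term contributes $\beta_n\langle u_n,v_n\rangle-\beta\langle u,v_n\rangle\to\beta d$. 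The outcome is $A(v_n)+B(v_n)-C(v_n)+\beta d\to0$, and since $A(v_n),B(v_n),C(v_n)\to0$ we obtain $\beta d=0$, hence $d=0$ as $\beta>0$. Together with $A(v_n)=\|\nabla v_n\|_2^2\to0$ this gives $v_n\to0$ in $H^1$, so $u_n\to u$ strongly, $u\in S(c)$ with $Q(u)=0$, and $E(u)=\lim E(u_n)=\gamma(c)$; thus $u\in V(c)$ is an optimizer. (If one wants $u\ge0$: since $Q(u)=0$ one has $Q(|u|)=A(|u|)-A(u)\le0$, and $A(|u|)<A(u)$ would, via Lemma~\ref{monotoneproperty}, produce $|u|^{t^*}\in V(c)$ with $E(|u|^{t^*})=\tfrac16(A(|u|^{t^*})+C(|u|^{t^*}))<\tfrac16(A(u)+C(u))=\gamma(c)$, impossible; hence $u$ is a constant phase times a nonnegative function, positive by elliptic regularity and the strong maximum principle.)

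The main obstacle is this last block, the exclusion of dichotomy. In Soave's radial setting compactness is free from the Strauss lemma; here it must be produced, and the correct substitute is the observation that the leaked profile $v_n$ asymptotically solves the \emph{linear} equation $-\Delta v+\beta v=0$, so the positivity $\beta=\beta_c>0$ --- which is exactly where the strict gap $\gamma(c)<\SSS^{3/2}/3$ is consumed --- rules out a nontrivial $L^2$ leak. The Brezis--Lieb accounting for the nonlocal dipolar term, based on $e_n\to0$ in $L^2$ and the $L^p$-boundedness of the Calder\'on--Zygmund operator $K*(\cdot)$, is the main bookkeeping effort but is routine.
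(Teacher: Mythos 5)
Your proposal is correct and follows essentially the same route as the paper's proof: Brezis--Lieb splitting of $A,B,C,D$ along $u_n=u+v_n$, nonvanishing of $u$ from Lemma~\ref{u not equal zero}, the identity $3E-Q=\tfrac12(A+C)$ combined with the monotonicity of $c\mapsto\gamma(c)$ to force $A(v_n),B(v_n),C(v_n)\to0$, the Sobolev argument showing $B(u)<0$ (hence $\beta>0$ via \eqref{beta bu}) under the strict bound $\gamma(c)<\SSS^{3/2}/3$, and finally $\beta>0$ to exclude the $L^2$ mass leak. The only differences are cosmetic: you re-derive the nonlocal Brezis--Lieb splitting that the paper cites from \cite{AntonelliSparber2011}, and you extract $\beta\,\|v_n\|_2^2\to0$ by testing the difference of the equations against $v_n$ rather than against $u_n$ and $u$ separately, which is the same computation.
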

\begin{proof}
let $(u_n,\beta_n)_n$ and $(u,\beta)$ be the PS-sequence and its limit deduced from Corollary \ref{ps cor}. Using the splitting properties given by \cite[Theorem 1]{BrezisLieb1983} for $L^p$-norms and by \cite{AntonelliSparber2011} for $B(u)$ we obtain that
\begin{align*}
A(u_n-u)+A(u)&=A(u_n)+o(1),\\
B(u_n-u)+B(u)&=B(u_n)+o(1),\\
C(u_n-u)+C(u)&=C(u_n)+o(1),\\
D(u_n-u)+D(u)&=D(u_n)+o(1),
\end{align*}
where $D(v):= \|v\|_2^2$. Since $E(u)=\frac{1}{2}A(u)+\frac{1}{4}B(u)-\frac{1}{6}C(u)$, we see that
\begin{equation*}
E(u_n-u)+E(u)=E(u_n)+o(1).
\end{equation*}
From the lower semicontinuity of the $L^2$-norm we obtain that
$$\|u\|_2^2\leq \liminf_{n\to\infty}\|u_n\|_2^2=c. $$
By Lemma \ref{u not equal zero}, we know that $u\neq 0$. Consequently, we infer that $u\in V(c_1)$ for some $c_1\in(0,c]$. We also have $\gamma(c)=E(u_n)+o(1)$ from Lemma \ref{ps cor}. Hence
\begin{equation}\label{Elesso1}
E(u_n-u)+\gamma(c_1)\leq E(u_n-u)+E(u)=E(u_n)+o(1)=\gamma(c)+o(1).
\end{equation}
On the other hand, direct calculation results in
\begin{equation}\label{EminusQ}
-Q(v)+3E(v)=\frac{1}{2}A(v)+\frac{1}{2}C(v)
\end{equation}
for all $v\in H^1(\R^3)$. Using $Q(u)=0$ and $Q(u_n)=o(1)$ from Lemma \ref{ps cor} we obtain that
\begin{equation*}
Q(u_n-u)=Q(u_n-u)+Q(u)=Q(u_n)+o(1)=o(1).
\end{equation*}
Inserting this into \eqref{EminusQ}, we conclude that $E(u_n-u)\geq o(1)$, since the right-hand side of \eqref{EminusQ} is always nonnegative. From Lemma \ref{infV(c)} we know that $\gamma(c_1)\geq \gamma(c)$, therefore it follows from \eqref{Elesso1} that $E(u_n-u)\leq o(1)$. Thus $E(u_n-u)=o(1)$. Since $A(\cdot)$ and $C(\cdot)$ are nonnegative, we obtain from $E(u_n-u)= o(1)$, $Q(u_n-u)=o(1)$ and \eqref{EminusQ} that
\begin{align*}
A(u_n-u)&=o(1),\notag \\
C(u_n-u)&=o(1).
\end{align*}
But $E(u_n-u)$ is a linear combination of $A(u_n-u)$, $B(u_n-u)$ and $C(u_n-u)$, it follows immediately that $B(u_n-u)=o(1)$. Now Corollary \ref{ps cor} implies
\begin{equation*}
\frac{1}{2}A(u_n)+\beta D(u_n)+B(u_n)+C(u_n)=\frac{1}{2}A(u)+\beta D(u)+B(u)+C(u)+o(1).
\end{equation*}
Using the previous splitting properties one has then
\begin{align}
\beta D(u_n)&=\beta D(u)+o(1)\notag \\
&=\beta\big(D(u_n)-D(u_n-u)+o(1)\big).
\end{align}
From this we infer that $\beta D(u_n-u)=o(1)$. Let us now show that $\beta>0$. Due to \eqref{beta bu} it is equivalently to show that $B(u)<0$. Suppose in contrast that $B(u)\geq 0$. Since
$$0=Q(u)=A(u)+\frac{3}{4}B(u)-C(u)$$
it follows that $A(u)\leq C(u) $, or in other words $\|\nabla u\|_2^2\leq \|u\|_6^6$. Together with the Sobolev inequality $\SSS \|u\|_6^2\leq \|\nabla u\|_2^2$ we obtain that
$$\SSS^{\frac{3}{4}}\leq \|\nabla u\|_2.$$
Thus
$$E(u)=E(u)-\frac{1}{6}Q(u)=\frac{1}{3}A(u)+\frac{1}{8}B(u)\geq \frac{1}{3}A(u)\geq\frac{\SSS^{\frac{3}{2}}}{3}. $$
Since $E(u_n-u)=o(1)$ and $E(u_n)=\gamma(c)+o(1)$, from the splitting property it follows that $\gamma(c)=E(u)\geq \frac{\SSS^{\frac{3}{2}}}{3}$. But $\gamma(c)\in(0, \frac{\SSS^{\frac{3}{2}}}{3})$ according to our assumption, hence we obtain a contradiction and $B(u)<0$. Therefore $D(u_n-u)=o(1)$. Together with $A(u_n-u)=o(1)$ we infer that $u_n\rightarrow u$ in $H^1(\R^3)$ and $u\in S(c)$. Using now Lemma \ref{vc implies sc} the proof is complete.
\end{proof}

\begin{proof}[Proof of Theorem \ref{ground state}]
We begin with the proof of (iv). In fact, by the definition of a stable pair, we immediately see
that $B(u)$ is always positive for any $u\in H^1(\R^3)\setminus\{0\}$. Hence the proof of (iv)
follows the one for \cite[Thm. 1.2. 2)]{SoaveCritical} verbatim, and we omit the details here. For
(i), we note that according to \cite[Lem. 6.4]{SoaveCritical}, \eqref{threshold} can in fact be
modified to the version that there exist $C_1,C_2>0$ such that
\begin{align*}
\tilde{E}(v_{\varepsilon}^{\tilde{t}^*})\leq \frac{\SSS^{\frac{3}{2}}}{3}-C_1\varepsilon^{\frac{1}{2}}+C_2\varepsilon<\frac{\SSS^{\frac{3}{2}}}{3}
\end{align*}
for all sufficiently small $\varepsilon$. Now (i) follows from Lemma \ref{thm smaller than sss}.

(ii) is a direct consequence of Lemma \ref{vanishing of gamma c} and Lemma \ref{thm smaller than
sss} by defining
\begin{align*}
c^*:=\inf\Big\{c>0:\gamma(c)<\frac{\SSS^{\frac32}}{3}\Big\}.
\end{align*}
That the ground states are in fact solutions of \eqref{pde} with positive $\beta$ follows from Lemma \ref{vc implies sc} and the proof of Lemma \ref{thm smaller than sss}; that the solutions of \eqref{pde} are positive follows directly from the strong maximum principle. Finally, we note that in the case of (iii), if a solution of \eqref{pde} is radial, then it must be a solution of \eqref{cqnls} with some $\ld_1\geq 0$. However, this is impossible due to \cite[Thm. 1.2. 2)]{SoaveCritical}. This finishes the desired proof.
\end{proof}

\addcontentsline{toc}{section}{Acknowledgments}
\subsubsection*{Acknowledgments}
Y. Luo acknowledges the funding by Deutsche Forschungsgemeinschaft (DFG) through the Priority Programme SPP-1886.

\addcontentsline{toc}{section}{References}
\bibliographystyle{hacm}

\end{document}